\theoremstyle{plain}
\newtheorem{theorem}{Theorem}[section]
\newtheorem{proposition}[theorem]{Proposition}
\newtheorem{lemma}[theorem]{Lemma}
\newtheorem{corollary}[theorem]{Corollary}
\newtheorem{question}[theorem]{Question}
\theoremstyle{definition}
\newtheorem{definition}[theorem]{Definition}
\theoremstyle{remark}
\newtheorem*{remarks}{Remarks}
\newcommand{\RiemannSphere}{\widehat{\mathbb{C}}}
\newcommand{\cS}{{\mathcal H^\uparrow}}
\begin{document}

\title{Continuity of capacity of a holomorphic motion}


\author[T. Ransford]{Thomas Ransford}
\address{D\'epartement de math\'ematiques et de statistique, Universit\'e Laval, Qu\'ebec (Qu\'ebec),  G1V 0A6, Canada.}
\email{thomas.ransford@mat.ulaval.ca}

\author[M. Younsi]{Malik Younsi}
\address{Department of Mathematics, University of Hawaii Manoa, Honolulu, HI 96822, USA.}
\email{malik.younsi@gmail.com}

\author[W.-H. Ai]{Wen-hui Ai}
\address{D\'epartement de math\'ematiques et de statistique, Universit\'e Laval, Qu\'ebec (Qu\'ebec),  G1V 0A6, Canada.}
\curraddr{College of Mathematics and Econometrics, Hunan University, Changsha, 410082, People'€™s Republic of China.}
\email{awhxyz123@163.com}

\keywords{Logarithmic capacity; analytic capacity; holomorphic motion; Julia set; quasiconformal mapping; harmonic measure; conformal welding}
\subjclass[2010]{primary 30C85; secondary 30C62, 31A15, 37F45}
\thanks{The first author was supported by grants from NSERC and the Canada Research Chairs program.
The second author was supported by NSF Grant DMS-1758295. The third author was supported in part by the NNSF of China (No.\ 11831007) and China Scholarship Council (No.\ 201906130015).}

\begin{abstract}
We study the behavior of various set-functions under holomorphic motions. We show that, under such deformations, logarithmic capacity varies continuously, while  analytic capacity may not.
\end{abstract}

\maketitle

\section{Introduction and statement of main results}
\label{sec1}

\subsection{Holomorphic motions}

Let $\mathbb{D}$ be the open unit disk in the complex plane~$\mathbb{C}$. Given a subset $A$ of the Riemann sphere $\RiemannSphere$, a \textit{holomorphic motion} of $A$ is a map $h:\mathbb{D} \times A \to \RiemannSphere$ such that

\begin{enumerate}[(i)]
\item for each fixed $z\in A$, the map $\lambda\mapsto h(\lambda,z)$
is holomorphic on $\mathbb{D}$,
\item for each fixed $\lambda \in \mathbb{D}$, the map $z\mapsto h(\lambda,z)$ is
injective on $A$,
\item $h(0,z)=z$ for all $z\in A$.
\end{enumerate}

For a holomorphic motion $h:\mathbb{D} \times A \to \RiemannSphere$ and a subset $E \subset A$, we write
$$h_\lambda(z):=h(\lambda,z) \qquad (\lambda \in \mathbb{D}, z\in A)$$
and
$$E_\lambda:=h_\lambda(E)$$
so that $E_0=E$.

Holomorphic motions were introduced by Ma\~{n}\'{e}, Sad and Sullivan \cite{MSS}, who proved that every holomorphic motion $h:\mathbb{D} \times A \to \RiemannSphere$ has an extension to a holomorphic motion $H:\mathbb{D} \times \overline{A} \to \RiemannSphere$, and that $H$ is jointly continuous in $(\lambda,z)$. This is usually referred to as the $\lambda$-lemma.

Sullivan and Thurston later conjectured in \cite{ST} that every holomorphic motion extends to a holomorphic motion of the whole Riemann sphere $\RiemannSphere$, and proved partial results. The full conjecture was finally proved by S\l odkowski in 1991.

\begin{theorem}[S\l odkowski \cite{SLO}]
\label{thmSlod}
If $h:\mathbb{D} \times A \to \RiemannSphere$ is a holomorphic motion of a set $A \subset \RiemannSphere$, then $h$ has an extension to a holomorphic motion $H: \mathbb{D} \times \RiemannSphere \to \RiemannSphere$.
\end{theorem}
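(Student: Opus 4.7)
The plan is to follow the outline of S\l odkowski's original argument, reducing the statement first to a one-point extension problem and then to a holomorphic section problem on a fibered domain in $\mathbb{D} \times \widehat{\mathbb{C}}$.

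First I would normalize: by pre-composing with a M\"obius transformation in $\lambda$ and post-composing with $\lambda$-dependent M\"obius transformations, one may assume $\{0,1,\infty\} \subset A$ and that $h$ fixes each of these three points for every $\lambda$. By the Ma\~{n}\'{e}--Sad--Sullivan $\lambda$-lemma, $h$ extends continuously to $\mathbb{D} \times \overline{A}$ as a holomorphic motion, so we may further assume $A$ is closed. Next, to reduce the global extension to a one-point extension, I would consider the family of pairs $(B,H)$, where $A \subset B \subset \widehat{\mathbb{C}}$ and $H$ is a holomorphic motion of $B$ extending $h$, ordered by set inclusion, and apply Zorn's lemma: continuity of extensions on closures (again via the $\lambda$-lemma) ensures that every chain has an upper bound, so a maximal element $(B^*,H^*)$ exists. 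If one can show that every holomorphic motion admits an extension by a single additional point, then $B^* = \widehat{\mathbb{C}}$ by maximality.

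The crux is therefore the one-point extension: given $h$ on a closed set $A$ and any $z_0 \notin A$, produce a holomorphic map $\varphi : \mathbb{D} \to \widehat{\mathbb{C}}$ with $\varphi(0) = z_0$ whose graph avoids every graph $\lambda \mapsto h(\lambda,z)$ for $z \in A$. Equivalently, the open set
$$\Omega := \{(\lambda,w) \in \mathbb{D} \times \widehat{\mathbb{C}} : w \notin h_\lambda(A)\}$$
must admit a holomorphic section through $(0,z_0)$. I expect this to be the main obstacle, and indeed it is the deep content of S\l odkowski's theorem. To establish it, I would invoke the theory of polynomial hulls of sets fibered over $\mathbb{D}$: because the fibers $h_\lambda(A)$ vary quasiconformally in $\lambda$, the complement $\Omega$ has enough pseudoconvexity that Forstneri\v{c}'s extension theorem for sections (or, equivalently, S\l odkowski's own disk-filling result for polynomially convex hulls in $\mathbb{D} \times \mathbb{C}$) produces the desired $\varphi$. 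The subtlety preventing a more elementary construction is precisely that the fibers $h_\lambda(A)$ can be arbitrary compacta of highly irregular shape, so no direct geometric interpolation is available; one must genuinely appeal to these existence theorems from several complex variables, which is why the result resisted proof until 1991.
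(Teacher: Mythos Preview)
The paper does not prove Theorem~\ref{thmSlod}; it is quoted from the literature with attribution to S\l odkowski and used as a black box throughout. There is therefore no in-paper argument to compare your proposal against.

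That said, your outline does follow the standard strategy of S\l odkowski's original proof: normalize so that three points are fixed, pass to the closure via the $\lambda$-lemma, use Zorn's lemma to reduce to a one-point extension, and then phrase that extension as the existence of a holomorphic section of the open fibered set $\Omega$ through $(0,z_0)$. The last step is indeed the heart of the matter, and you are right that it rests on S\l odkowski's own results about analytic disks in polynomial hulls of sets fibered over $\partial\mathbb{D}$ rather than on anything elementary. One small caution: the appeal to ``Forstneri\v{c}'s extension theorem'' is not quite the historically or logically correct hook here; the relevant input is S\l odkowski's earlier theorem that the polynomial hull of a compact set in $\overline{\mathbb{D}}\times\mathbb{C}$ with connected fibers over $\partial\mathbb{D}$ is filled by graphs of bounded analytic functions on $\mathbb{D}$. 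If you intend to write this up in full, that is the result you would need to state precisely and invoke.
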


Theorem \ref{thmSlod} serves as motivation to consider holomorphic motions of the whole Riemann sphere. Now, consider a real-valued function $\mathcal{F}$ defined on compact subsets of $\mathbb{C}$. Let $E \subset \mathbb{C}$ be compact, and suppose that $h:\mathbb{D} \times \RiemannSphere \to \RiemannSphere$ is a holomorphic motion of $\RiemannSphere$ with $h_\lambda(\infty)=\infty$ for all $\lambda \in \mathbb{D}$. Then, for each $\lambda \in \mathbb{D}$, the set $E_\lambda \subset \mathbb{C}$ is compact by the $\lambda$-lemma, so the function
$$\lambda \mapsto \mathcal{F}(E_\lambda) \qquad (\lambda \in \mathbb{D})$$
is well-defined. The behavior of this function has been extensively studied, for various set-functions $\mathcal{F}$. For instance, the behavior of Hausdorff dimension under holomorphic motions was studied by Ruelle \cite{RUE} and by Astala--Zinsmeister \cite{AZ} in the setting of limit sets of Fuschian groups, and by Ruelle \cite{RUE} and by the first author \cite{RAN} for Julia sets of rational maps. Holomorphic motions also played a fundamental role in the seminal work of Astala in \cite{AST} on distortion of dimension and area under quasiconformal mappings. Lastly, we also mention the work of Earle--Mitra \cite{EM} on conformal modulus and the recent work of Zakeri \cite{ZAK} on applications to holomorphic dynamics.

Given a compact subset $E$ of $\mathbb{C}$, we denote by $A(E)$ the area of $E$, and by $\dim_H(E)$ the Hausdorff dimension of $E$. As an easy consequence of Astala's results in \cite{AST}, we obtain the following result.

\begin{theorem}[Astala \cite{AST}]
\label{Astala}
Let $E \subset \mathbb{C}$ be compact and let $h:\mathbb{D} \times \RiemannSphere \to \RiemannSphere$ be a holomorphic motion of $\RiemannSphere$ with $h_\lambda(\infty)=\infty$ for all $\lambda \in \mathbb{D}$. Then the functions
$$\lambda \mapsto A(E_\lambda)
\quad\text{and}\quad
\lambda\mapsto\dim_H(E_\lambda) \qquad (\lambda \in \mathbb{D})$$
are both continuous.
\end{theorem}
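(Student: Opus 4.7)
The plan is to deduce both continuity statements directly from Astala's quasiconformal distortion theorems, using the quasiconformal structure of the holomorphic motion supplied by the $\lambda$-lemma. Fix $\lambda_0\in \mathbb{D}$; it suffices to prove continuity at $\lambda_0$. By the classical $\lambda$-lemma, each slice $h_\lambda$ is $\frac{1+|\lambda|}{1-|\lambda|}$-quasiconformal on $\RiemannSphere$. Reparametrising $\mathbb{D}$ by the M\"obius automorphism sending $\lambda_0$ to $0$ produces a new holomorphic motion whose time-$\lambda$ slice is $g_\lambda := h_\lambda\circ h_{\lambda_0}^{-1}$, so $g_\lambda$ is $K_\lambda$-quasiconformal with $K_\lambda \to 1$ as $\lambda\to\lambda_0$. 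Since every $h_\mu$ fixes $\infty$, so does $g_\lambda$, and the joint continuity in the $\lambda$-lemma gives $g_\lambda\to\mathrm{id}$ uniformly on compact subsets of $\mathbb{C}$.

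For the area, I would apply Astala's area distortion theorem to the family $(g_\lambda)$. Choose a closed disk $D$ containing $E_{\lambda_0}$ together with all $E_\lambda$ for $\lambda$ near $\lambda_0$ (possible by the uniform convergence above), and pass to Astala's principal normalisation via an affine conjugation. His inequality then supplies a bound
\begin{equation*}
A(g_\lambda(F)) \le C(K_\lambda)\, A(F)^{1/K_\lambda}
\end{equation*}
valid for every Borel $F\subset D$, with $C(K)\to 1$ as $K\to 1$. Taking $F=E_{\lambda_0}$ yields $\limsup_{\lambda\to\lambda_0}A(E_\lambda)\le A(E_{\lambda_0})$; applying the same estimate to $g_\lambda^{-1}$ (which is again $K_\lambda$-quasiconformal) with $F=E_\lambda$ yields the matching lower bound, hence continuity of $\lambda\mapsto A(E_\lambda)$.

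For the Hausdorff dimension, I would invoke Astala's dimension distortion bound: for any $K$-quasiconformal homeomorphism $f$ of $\mathbb{C}$ and any compact set $F$,
\begin{equation*}
\frac{1}{K}\left(\frac{1}{\dim_H F}-\frac{1}{2}\right) \le \frac{1}{\dim_H f(F)}-\frac{1}{2} \le K\left(\frac{1}{\dim_H F}-\frac{1}{2}\right),
\end{equation*}
with the natural interpretation at the endpoints $\dim_H F\in\{0,2\}$. Applied to $f=g_\lambda$ and $F=E_{\lambda_0}$, this two-sided pinch forces $\dim_H(E_\lambda)\to\dim_H(E_{\lambda_0})$ as $K_\lambda\to 1$.

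The one step that demands some care is verifying that the multiplicative constant $C(K_\lambda)$ in the area inequality really tends to $1$ as $K_\lambda\to 1$; otherwise one would obtain only upper semicontinuity on one side. This is standard after passing to Astala's principal form on a fixed disk, and is built into the sharp version of his area distortion theorem. Everything else is a direct application.
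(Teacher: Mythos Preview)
The paper does not actually supply a proof of this theorem: it is stated as an ``easy consequence of Astala's results in \cite{AST}'' and then used as background. So there is no argument in the paper to compare against; your proposal is precisely the kind of derivation the authors are gesturing at.

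Your outline is correct. The reparametrisation $g_\lambda = h_\lambda\circ h_{\lambda_0}^{-1}$, obtained from the M\"obius change of base point, is again a holomorphic motion, so by the Bers--Royden characterisation (the paper's Theorem~\ref{thmBR}) its dilatation satisfies $K_\lambda\to 1$ as $\lambda\to\lambda_0$. Astala's two-sided dimension inequality then pins down $\dim_H(E_\lambda)$ immediately, and the sharp area-distortion bound (applied to $g_\lambda$ and to $g_\lambda^{-1}$) gives the matching upper and lower bounds for $A(E_\lambda)$. You are right to flag the constant $C(K_\lambda)\to 1$ as the only point needing care: one gets it by factoring $g_\lambda$ through a principal $K_\lambda$-quasiconformal map that is conformal off a fixed disk $D\supset E_{\lambda_0}$ (so Astala's sharp estimate $|f(E)|\le \pi^{1-1/K}|E|^{1/K}$ applies with a constant tending to $1$), composed with a map conformal on the image of $D$ whose Jacobian tends to $1$ uniformly on compacta because $g_\lambda\to\mathrm{id}$ locally uniformly. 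This is routine, and your acknowledgement of it is appropriate.
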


Our aim in this article is to determine whether similar results hold for logarithmic capacity and analytic capacity.

\subsection{Logarithmic capacity and holomorphic motions}

Recall that the \textit{logarithmic capacity} of a compact set $E \subset \mathbb{C}$ is defined by
$$c(E):=\exp{\Bigl(-\inf_\mu I(\mu)\Bigr)},$$
where the infimum is taken over all Borel probability measures $\mu$ supported on $E$ and $I(\mu):=-\int \int \log{|z-w|} \, d\mu(z) \, d\mu(w)$ is the logarithmic energy of the measure $\mu$.

Our first main result is the following theorem.

\begin{theorem}
\label{mainthm1}
Let $E \subset \mathbb{C}$ be compact and let $h:\mathbb{D} \times \RiemannSphere \to \RiemannSphere$ be a holomorphic motion of $\RiemannSphere$ with $h_\lambda(\infty)=\infty$ for all $\lambda \in \mathbb{D}$. Then the function
$$\lambda \mapsto c(E_\lambda) \qquad (\lambda \in \mathbb{D})$$
is  continuous.
\end{theorem}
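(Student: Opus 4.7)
My plan is to treat this as a potential-theoretic continuity statement. A standard M\"obius reparameterization in $\lambda$ reduces the problem to proving continuity at $\lambda=0$. The basic tool throughout is that, by S\l odkowski's theorem together with the standard quasiconformality of slices of holomorphic motions of $\widehat{\mathbb{C}}$, each $h_\lambda$ is a $K(\lambda)$-quasiconformal self-homeomorphism of $\widehat{\mathbb{C}}$ fixing $\infty$, with $K(\lambda)=(1+|\lambda|)/(1-|\lambda|)\to 1$ as $\lambda\to 0$. Joint continuity from the $\lambda$-lemma simultaneously forces $E_\lambda\to E$ in the Hausdorff metric.

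I would prove upper and lower semicontinuity at $\lambda=0$ separately. For upper semicontinuity, given $\lambda_n\to 0$, I would extract a weak-$*$ subsequential limit $\nu$ of the equilibrium measures $\mu_{E_{\lambda_n}}$. Hausdorff convergence forces $\operatorname{supp}(\nu)\subset E$, hence $I(\nu)\geq I(\mu_E)=-\log c(E)$, and lower semicontinuity of the kernel $-\log|z-w|$ under weak convergence of the product measures $\mu_{E_{\lambda_n}}\otimes\mu_{E_{\lambda_n}}$ gives
\[
-\log c(E)\leq I(\nu)\leq\liminf_n I(\mu_{E_{\lambda_n}})=-\limsup_n\log c(E_{\lambda_n}),
\]
so $\limsup_n c(E_{\lambda_n})\leq c(E)$.

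For lower semicontinuity, assume first $c(E)>0$, so $\mu_E$ exists and has no atoms (an atom would force $I(\mu_E)=+\infty$). The natural competitor on $E_\lambda$ is $\mu_\lambda:=(h_\lambda)_*\mu_E$. Joint continuity with $h_0=\mathrm{id}$ gives $-\log|h_\lambda(z)-h_\lambda(w)|\to-\log|z-w|$ pointwise off the $\mu_E\otimes\mu_E$-null diagonal. To apply dominated convergence I would invoke the Mori-type H\"older estimate for $K$-quasiconformal maps: after renormalizing $h_\lambda$ by a M\"obius transformation so that it fixes $\infty$ together with two further points varying continuously in $\lambda$, one obtains a uniform bound
\[
|h_\lambda(z)-h_\lambda(w)|\geq C^{-1}|z-w|^{K_r}\qquad(z,w\in M,\ |\lambda|\leq r<1)
\]
on any fixed compact $M\supset E$, with $K_r=(1+r)/(1-r)$. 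Taking $-\log$ shows the integrand is dominated by $K_r\bigl|\log|z-w|\bigr|+C'$, which is $\mu_E\otimes\mu_E$-integrable since $I(\mu_E)<\infty$. Dominated convergence then yields $I(\mu_\lambda)\to I(\mu_E)$, and the bound $-\log c(E_\lambda)\leq I(\mu_\lambda)$ gives $\liminf_{\lambda\to 0}c(E_\lambda)\geq c(E)$.

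The degenerate case $c(E)=0$ reduces to quasiconformal invariance of polarity, which follows from the companion Mori bound $|h_\lambda(z)-h_\lambda(w)|\leq C|z-w|^{1/K_r}$ by pulling any probability measure on $E_\lambda$ back through $h_\lambda$ and observing that infinite logarithmic energy transfers. I expect the main obstacle to be the lower semicontinuity step itself: capacity is only upper semicontinuous under general Hausdorff convergence of compact sets, so no soft argument can succeed, and one really has to exploit the quasiconformality of the motion to control the distortion of the equilibrium energy through the Mori estimate. A secondary but genuine technical point is ensuring that the Mori constants can be chosen uniformly in $\lambda$ on $\overline{\mathbb{D}_r}$, which requires a careful continuous choice of three normalization points; this is the place where the full strength of S\l odkowski's extension to $\widehat{\mathbb{C}}$ is actually used.
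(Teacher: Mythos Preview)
Your argument is essentially correct, but it takes a genuinely different route from the paper. The paper never touches equilibrium measures, quasiconformal distortion, or Mori--H\"older estimates. Instead it works through the transfinite diameter: for each $n\ge2$ and each $n$-tuple $z_1,\dots,z_n\in E$, the function $\lambda\mapsto\log\prod_{j<k}|h_\lambda(z_j)-h_\lambda(z_k)|$ is harmonic on $\mathbb{D}$, so $\lambda\mapsto\log\delta_n(E_\lambda)$ is a locally bounded pointwise supremum of harmonic functions. The paper isolates this class (``$\mathcal{H}^\uparrow(\mathbb{D})$''), observes that it is closed under decreasing pointwise limits via a normal-families argument, and then invokes Fekete--Szeg\H{o} ($\delta_n\downarrow c$) to place $\lambda\mapsto\log c(E_\lambda)$ in the same class. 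Continuity then drops out of a one-line Harnack estimate, and one obtains subharmonicity of $\log c(E_\lambda)$ for free.

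Your approach trades this structural picture for hard analysis: upper semicontinuity via Hausdorff convergence and weak-$*$ limits of equilibrium measures is standard, and your lower semicontinuity step---pushing forward $\mu_E$ and dominating the energy integrand by the Mori bound---is the genuinely new input. The cost is that you must justify uniform Mori constants on $|\lambda|\le r$ (you flag this correctly; it is routine once $h_\lambda$ is normalized at three continuously varying points), and you do not recover subharmonicity of $\log c(E_\lambda)$. The gain is that your method is kernel-agnostic: the same template would handle Riesz capacities or other energies where no Fekete--Szeg\H{o} identity is available, whereas the paper's argument leans specifically on the product structure of the transfinite diameter.
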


The proof of Theorem \ref{mainthm1} is based on an alternative characterization of logarithmic capacity as the transfinite diameter. It also shows that, under the same hypotheses, $\lambda\mapsto\log c(E_\lambda)$ is a subharmonic function.
This fact was already known; it is a special case of a result of Yamaguchi on analytic multifunctions \cite{YA}.

\subsection{Analytic capacity and holomorphic motions}

Recall that the \textit{analytic capacity} of a compact set $K \subset \mathbb{C}$ is defined by
$$\gamma(K):=\sup_{g}|g'(\infty)|,$$
where the supremum is taken over all holomorphic functions $g:\RiemannSphere \setminus K \to \mathbb{D}.$ Here $g'(\infty)$ is defined by
$$
g'(\infty):=\lim_{z\to\infty}z(g(z)-g(\infty)).
$$

Analytic capacity was introduced by Ahlfors \cite{AHL} for the study of the Painlev\'e problem of characterizing removable singularities for bounded holomorphic functions. It was later observed by Vitushkin \cite{VIT} that analytic capacity is a fundamental tool in the theory of rational approximation of holomorphic functions. Analytic capacity is notoriously hard to estimate and its properties remain quite mysterious, although there exist efficient numerical methods in some cases, see e.g.\ \cite{YOR}. For more information on analytic capacity, we refer the reader to \cite{GAR}, \cite{TOL} and \cite{YOU}.

The study of the behavior of analytic capacity under holomorphic motions was instigated by the first two authors and Pouliasis in \cite{PRY}, who showed that there exist a compact set $E$ and a holomorphic motion $h:\mathbb{D} \times \RiemannSphere \to \RiemannSphere$ fixing $\infty$ for which the functions $\lambda \mapsto \gamma(E_\lambda)$ and $\lambda \mapsto \log{\gamma(E_\lambda)}$ are neither subharmonic nor superharmonic on $\mathbb{D}$.
It turns out that these functions need not be continuous either. This is our second main result.

\begin{theorem}
\label{mainthm2}
There exist a compact set $E \subset \mathbb{C}$ and a holomorphic motion $h:\mathbb{D} \times \RiemannSphere \to \RiemannSphere$ such that $h_\lambda(\infty)=\infty$ for all $\lambda \in \mathbb{D}$, but the function
$$\lambda \mapsto \gamma(E_\lambda) \qquad (\lambda \in \mathbb{D})$$
is not continuous.
\end{theorem}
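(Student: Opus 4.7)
The plan is to exploit the sharp rectifiability dichotomy for analytic capacity on one-dimensional sets: by the theorem of Tolsa (building on Calder\'on, David, and Vitushkin), a compact set $E \subset \mathbb{C}$ with $0<\mathcal{H}^1(E)<\infty$ has $\gamma(E)>0$ if and only if $E$ contains a rectifiable subset of positive length. This makes $\gamma$ essentially a ``binary'' invariant on such sets, and so it becomes plausible that a holomorphic family $E_\lambda$ can jump between the two regimes. Concretely, the goal is to produce $E$ and $h$ together with a sequence $\lambda_n\to 0$ in $\mathbb{D}$ such that $\gamma(E_0)=0$ while $\gamma(E_{\lambda_n})\geq c>0$.

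The natural choice for $E$ is a purely unrectifiable set of positive finite $\mathcal{H}^1$-measure, such as Garnett's four-corners Cantor set $E=\bigcap_n F_n$, where $F_n$ is a union of $16^n$ squares of side $16^{-n}$ and $\gamma(E)=0$. The motion $h$ should then be constructed scale-by-scale to interact with the hierarchical structure of $E$. On each level $n$ one introduces a $\lambda$-dependent quasiconformal perturbation $g_{n,\lambda}$, holomorphic in $\lambda$ with $g_{n,0}=\mathrm{id}$, supported in neighborhoods of the level-$n$ cells, and tuned so that at the single parameter value $\lambda=\lambda_n$ a positive $\mathcal{H}^1$-measure subcollection of the level-$n$ cells is translated/rotated onto a common Lipschitz graph $\Gamma_n$. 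Composing the $g_{n,\lambda}$ in scale-decreasing order, subject to a summability estimate on their Beltrami coefficients, yields a single quasiconformal map $h_\lambda$; the family $\{h_\lambda\}_{\lambda\in\mathbb{D}}$ is then shown to define a holomorphic motion (using the $\lambda$-lemma and, if needed, S\l odkowski's theorem to extend to all of $\RiemannSphere$ while fixing $\infty$). For $\lambda=\lambda_n$, the set $E_{\lambda_n}$ contains a positive-length subset of $\Gamma_n$, whence Calder\'on's theorem gives $\gamma(E_{\lambda_n})\geq c_n>0$; by uniformly choosing scales one can arrange a lower bound independent of $n$.

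The main obstacle is the tension between holomorphicity and geometric alignment: the perturbations $g_{n,\lambda}$ must simultaneously be holomorphic in $\lambda$ (hence of amplitude comparable to $|\lambda-\lambda_n|$ near $\lambda_n$), leave the identity invariant at $\lambda=0$, and yet straighten a full positive-measure portion of the level-$n$ cells at $\lambda=\lambda_n$. One must therefore balance the sizes $|\lambda_n|$ of the perturbation parameters against the amount of straightening achievable at scale $n$, and verify a uniform lower bound for $\gamma(E_{\lambda_n})$. A subsidiary difficulty is verifying injectivity of $h_\lambda$ across scales, which is controlled by requiring the Beltrami perturbations at distinct scales to have essentially disjoint supports. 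I expect the verification of the uniform lower bound on $\gamma(E_{\lambda_n})$ via Calder\'on's theorem, and the compatibility of the telescoping perturbations with the holomorphic motion axioms, to be the technically most demanding steps.
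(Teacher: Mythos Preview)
Your proposal has a genuine gap at its core mechanism. You propose a level-$n$ perturbation $g_{n,\lambda}$ that, at $\lambda=\lambda_n$, translates or rotates a positive-measure collection of level-$n$ cells onto a Lipschitz graph $\Gamma_n$. But a rigid motion of a cell carries the piece of the four-corners Cantor set inside it to a congruent copy, which is still a scaled four-corners set and hence purely unrectifiable. Thus $E_{\lambda_n}$ is a finite union of rigidly moved, purely unrectifiable pieces, so $E_{\lambda_n}$ is itself purely unrectifiable and $\gamma(E_{\lambda_n})=0$---the opposite of what you need. Rectifiability is governed by the structure of the set at \emph{all} scales simultaneously, not by the positions of cells at any single scale; to produce a positive-length rectifiable subset of $E_{\lambda_n}$ you would have to align the self-similar hierarchy at every scale at once, and your scheme has no mechanism for this, since each $g_{m,\lambda}$ is tuned only for its own parameter $\lambda_m$ and the others contribute unrelated perturbations at $\lambda_n$. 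The subsidiary issues you flag (composing the $g_{n,\lambda}$ into a genuine holomorphic motion, uniform lower bounds on $\gamma(E_{\lambda_n})$) are moot until this obstruction is resolved.

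For contrast, the paper's construction runs in the opposite direction and sidesteps any rectifiability engineering. It takes $E\subset\mathbb{T}$ of \emph{positive} length (so $\gamma(E)>0$ immediately) and builds a holomorphic motion $h$ with $h_\lambda(\mathbb{T})=\mathbb{T}$ for real $\lambda$, arranged so that $h_\lambda|_{\mathbb{T}}$ is the conformal welding of the quasicircle Julia set $\mathcal{J}_{\lambda/4}$. For small nonzero real $\lambda$ that Julia set has no tangent points (Fatou), so by Bishop--Carleson--Garnett--Jones the interior and exterior harmonic measures on it are mutually singular; this forces $h_\lambda|_{\mathbb{T}}$ to be a singular circle homeomorphism. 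Intersecting the resulting full-measure sets over a sequence $\lambda_n\to 0$ and passing to a compact subset yields $E$ with $\sigma(E)>0$ but $\sigma(E_{\lambda_n})=0$, hence $\gamma(E_{\lambda_n})=0$ for all $n$.
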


Some remarks are in order here. First, note that analytic capacity is closely related to $1$-dimensional Hausdorff measure $\mathcal{H}^1$. Indeed, a classical result generally attributed to Painlev\'e states that $\gamma(K)=0$ whenever $\mathcal{H}^1(K)=0$. The converse is false in general, although it does hold provided that the set $K$ is contained in a smooth (even rectifiable) curve. In view of this, a natural approach to proving Theorem~\ref{mainthm2} is first to study the behavior of $1$-dimensional Hausdorff measure under holomorphic motions. It turns out that one can easily construct a compact set $E$ and a holomorphic motion $h:\mathbb{D} \times \RiemannSphere \to \RiemannSphere$ for which $\mathcal{H}^1(E_\lambda)$ does not vary continuously with $\lambda$; we present such an example in Section~\ref{S:conclusion}. The sets $E_\lambda$ of this example, however, are dynamical in nature, and it is unclear how to determine or even estimate their analytic capacity.

In order to circumvent this difficulty, we instead construct an example for which the sets $E_\lambda$ lie on a smooth curve, in fact, on the unit circle $\mathbb{T}$.
The idea is to consider a special collection of quasiconformal mappings $h_\lambda: \RiemannSphere \to \RiemannSphere$ depending holomorphically on $\lambda$. For $\lambda \in \mathbb{D} \cap \mathbb{R}$, the map $h_\lambda$ is constructed so that its restriction to the unit circle $\mathbb{T}$ is the conformal welding of a quasicircle quadratic Julia set with corresponding parameter in the main cardioid of the Mandelbrot set. A classical result of Fatou combined with a theorem of Bishop--Carleson--Garnett--Jones imply that the harmonic measures from both sides of this Julia set are mutually singular. We deduce from this that $h_\lambda$ is singular, i.e. it maps some Borel subset $A^\lambda$ of $\mathbb{T}$ with full Lebesgue measure onto a subset of $\mathbb{T}$ with zero Lebesgue measure. Theorem \ref{mainthm2} then  follows by combining this construction with a classical result of Murai on the analytic capacity of subsets of $\mathbb{T}$.

The rest of the paper is organized as follows. Section~\ref{logcap} contains the proof of Theorem~\ref{mainthm1}, together with a summary of the necessary prerequisites from potential theory.
Section~\ref{ancap} contains the proof of Theorem~\ref{mainthm2},
including the necessary preliminaries on quasiconformal mappings, Julia sets and conformal welding. We conclude in Section~\ref{S:conclusion} with some remarks and a question.

\section{Logarithmic capacity and the proof of Theorem~\ref{mainthm1}}
\label{logcap}

\subsection{Transfinite diameter}
Let $E \subset \mathbb{C}$ be compact. Recall from the introduction that the logarithmic capacity of $E$ is given by
$$c(E):=\exp{\Bigl(-\inf_\mu I(\mu)\Bigr)},$$
where the infimum is taken over all Borel probability measures $\mu$ supported on $E$ and $I(\mu)=-\int \int \log{|z-w|} \, d\mu(z) \, d\mu(w)$.
To establish Theorem~\ref{mainthm1}, it is more convenient to work with an alternative characterization of logarithmic capacity.

\begin{definition}
For $E \subset \mathbb{C}$ compact and $n \geq 2$, we define the \textit{n-th diameter} of $E$ by
$$\delta_n(E):= \sup \left\{ \prod_{j<k} |w_j-w_k|^{2/n(n-1)}:w_1, \dots, w_n \in E \right\}.$$
\end{definition}

\begin{theorem}[Fekete--Szeg\H o]
\label{Fekete}
The sequence $(\delta_n(E))_{n \geq 2}$ is decreasing, and
$$\lim_{n \to \infty} \delta_n(E)=c(E).$$
\end{theorem}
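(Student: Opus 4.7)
The plan is to establish monotonicity by a combinatorial argument on Vandermonde products, and then sandwich the limit between $c(E)$ and itself using two complementary potential-theoretic arguments: one that integrates the log-energy against the equilibrium measure to produce good $n$-tuples, and one that passes to a weak-$*$ limit of the Fekete configurations.

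For monotonicity, I would fix $w_1,\dots,w_{n+1}\in E$ and, for each $i$, form the $n$-subset $S_i$ obtained by deleting $w_i$. The product
$$\prod_{i=1}^{n+1}\prod_{\substack{j<k\\ j,k\neq i}} |w_j-w_k|$$
contains each factor $|w_j-w_k|$ exactly $n-1$ times (once for each $i\notin\{j,k\}$), so it equals $\prod_{j<k\leq n+1}|w_j-w_k|^{n-1}$. Bounding each inner product by $\delta_n(E)^{n(n-1)/2}$ and taking the appropriate root yields $\delta_{n+1}(E)\leq \delta_n(E)$. Set $\delta_\infty:=\lim_n \delta_n(E)$.

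For the bound $\delta_\infty\geq c(E)$, I assume first that $c(E)>0$ and let $\nu$ be the equilibrium measure, so $I(\nu)=-\log c(E)$. Integrating the function $(w_1,\dots,w_n)\mapsto \sum_{j<k}\log|w_j-w_k|$ against $\nu^{\otimes n}$ gives $\binom{n}{2}\log c(E)$ by Fubini, hence some $n$-tuple in $E^n$ must realize a value at least this large, which gives $\delta_n(E)\geq c(E)$ for every $n$. The case $c(E)=0$ will be handled trivially by the other inequality.

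For the reverse bound $\delta_\infty\leq c(E)$, I choose Fekete $n$-tuples $w_1^{(n)},\dots,w_n^{(n)}\in E$ attaining $\delta_n(E)$ and form the uniform measures $\mu_n=\tfrac{1}{n}\sum_j\delta_{w_j^{(n)}}$. A direct computation gives the off-diagonal discrete log-energy
$$-\frac{1}{n^2}\sum_{j\neq k}\log|w_j^{(n)}-w_k^{(n)}|=-\frac{n-1}{n}\log\delta_n(E).$$
Extract a weak-$*$ limit $\mu_{n_k}\rightharpoonup \mu$ on $E$. The expected obstacle is here: the log-kernel is unbounded below on the diagonal, so weak-$*$ convergence does not directly transfer the energy integral. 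I would pass through the truncations $K_M(z,w):=\min(-\log|z-w|,M)$, which are bounded and continuous, so $\iint K_M\,d\mu^{\otimes 2}=\lim_k\iint K_M\,d\mu_{n_k}^{\otimes 2}$; the right-hand side is at most $-\tfrac{n_k-1}{n_k}\log\delta_{n_k}(E)+M/n_k$ since $K_M\leq -\log|\cdot-\cdot|$ off the diagonal, and the diagonal contribution $M/n_k$ vanishes. Letting $M\to\infty$ by monotone convergence gives $I(\mu)\leq -\log\delta_\infty$, so $-\log c(E)=\inf_\nu I(\nu)\leq I(\mu)\leq -\log\delta_\infty$, and thus $c(E)\geq\delta_\infty$. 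Combined with the previous step, $c(E)=\delta_\infty$ (and in particular $c(E)=0$ forces $\delta_\infty=0$ as well).
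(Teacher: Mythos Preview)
Your proof is correct and complete. The paper does not actually prove this theorem; it simply cites \cite[Theorem~5.5.2]{RAN2} (Ransford's \emph{Potential Theory in the Complex Plane}), and the argument you have written is precisely the classical one found there: the combinatorial deletion argument for monotonicity, the averaging against $\nu^{\otimes n}$ for $\delta_n(E)\ge c(E)$, and the weak-$*$ limit of Fekete measures combined with truncation of the kernel for the reverse inequality. One minor remark: when you invoke monotone convergence to let $M\to\infty$, note that $K_M$ is bounded below on $E\times E$ by $-\log\operatorname{diam}(E)$, so you can shift by a constant before applying MCT; and the passage from $\mu_{n_k}\rightharpoonup\mu$ to $\mu_{n_k}\otimes\mu_{n_k}\rightharpoonup\mu\otimes\mu$ is standard (test first against tensor products $f(z)g(w)$ and use Stone--Weierstrass). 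Neither point is a gap, just something worth making explicit if you write this up formally.
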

See \cite[Theorem 5.5.2]{RAN2}. The limit is often called the \emph{transfinite diameter} of $E$.

\subsection{Pointwise suprema of harmonic functions}

We now introduce a new family of functions.

\begin{definition}
Let $\Omega \subset \mathbb{C} $ be a domain.
We define $\cS(\Omega)$ to be the family of functions $u:\Omega \to [-\infty,\infty)$ such that:
\begin{enumerate}[(a)]
\item $u$ is locally bounded above on $\Omega$,
\item $u$ is the pointwise supremum of a family of harmonic functions on $\Omega$.
\end{enumerate}
\end{definition}

\begin{remarks}
\begin{enumerate}[(i)]
\item We allow the family in (b) to be empty; in this case $u \equiv -\infty$.
\item The supremum in (b) is attained at every point. This easily follows from a normal-families argument.
\item It can happen that $u$ satisfies condition (b) without satisfying condition (a). One can construct examples
using Runge's theorem.
\end{enumerate}
\end{remarks}

The following result summarizes some basic stability properties of $\cS(\Omega)$.

\begin{proposition}
\label{propstab}
Let $\Omega$ be a plane domain. Then
\begin{enumerate}[\normalfont(i)]
\item If $u,v \in \cS(\Omega)$ and $\alpha, \beta \geq 0$, then $\alpha u + \beta v \in \cS(\Omega)$.
\item If $u_i \in \cS(\Omega)$ for all $i\in I$ and if $u:=\sup_{i\in I} u_i$ is locally bounded above, then $u \in \cS(\Omega)$.
\item If $u_n \in \cS(\Omega)$ for each $n \in \mathbb{N}$ and if $u_n\downarrow u$, then $u \in \cS(\Omega)$.
\end{enumerate}
\end{proposition}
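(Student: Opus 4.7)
Parts (i) and (ii) are manipulations of the defining families. For (i), writing $u = \sup_i h_i$ and $v = \sup_j g_j$ with $\{h_i\}, \{g_j\}$ families of harmonic functions, the identity
\[
\alpha u + \beta v = \sup_{i,j}(\alpha h_i + \beta g_j)
\]
holds because $\alpha,\beta \geq 0$, so $\alpha u + \beta v$ is a pointwise supremum of harmonic functions, with local upper boundedness inherited from $u$ and $v$. For (ii), if $u_i = \sup_j h_{i,j}$, then $\sup_i u_i = \sup_{i,j} h_{i,j}$ is again a supremum of a family of harmonic functions, and the required local upper bound is provided by hypothesis.

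For (iii), the heart of the proposition, it is useful to note that any $w \in \cS(\Omega)$ satisfies $w = \sup \mathcal{F}(w)$, where $\mathcal{F}(w) := \{h \text{ harmonic on } \Omega : h \leq w\}$, since any defining family for $w$ is contained in $\mathcal{F}(w)$. With $\mathcal{F}_n := \mathcal{F}(u_n)$ and $\mathcal{F} := \mathcal{F}(u)$, the monotonicity $u_n \downarrow u$ yields $\mathcal{F} = \bigcap_n \mathcal{F}_n$ and $\sup \mathcal{F} \leq u$. The task is to establish the reverse inequality pointwise: for each $z_0 \in \Omega$ with $u(z_0) > -\infty$, produce $h \in \mathcal{F}$ with $h(z_0) = u(z_0)$.

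To build such an $h$, apply remark~(ii) to pick, for each $n$, a harmonic $h_n$ from the defining family of $u_n$ with $h_n(z_0) = u_n(z_0)$; then $h_n(z_0) \downarrow u(z_0)$ and $h_n \leq u_n \leq u_1$ globally. Since $u_1$ is locally bounded above, $\{h_n\}$ is locally uniformly bounded above on $\Omega$. On any compact $K \subset \Omega$ containing $z_0$, choose $M$ with $u_1 \leq M$ near $K$; then $M - h_n \geq 0$ is harmonic with $(M - h_n)(z_0)$ bounded, and Harnack's inequality, valid since $\Omega$ is connected, yields a uniform upper bound for $M - h_n$ on $K$. Thus $\{h_n\}$ is locally uniformly bounded on $\Omega$, and the normal-family property of harmonic functions, combined with an exhaustion-plus-diagonalisation argument, produces a subsequence $h_{n_k} \to h$ locally uniformly on $\Omega$, with $h$ harmonic. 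Passing to the limit in $h_{n_k} \leq u_{n_k}$ using $u_n \downarrow u$ gives $h \leq u$ on $\Omega$, so $h \in \mathcal{F}$, with $h(z_0) = u(z_0)$ as required.

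Mixed behavior of $u$ is ruled out by the very same Harnack bound: if $u$ were finite at some $z_1$ while $u(z_0) = -\infty$ at some other $z_0$, the construction at $z_1$ with a compact set $K$ containing both points would force $h_n(z_0)$ to stay bounded, contradicting $h_n(z_0) \leq u_n(z_0) \downarrow -\infty$. Hence either $u \equiv -\infty$, which lies in $\cS(\Omega)$ trivially via the empty family, or $u > -\infty$ everywhere on $\Omega$, in which case the pointwise construction above exhibits $u = \sup \mathcal{F}$, completing the proof. The main obstacle throughout is the compactness step: only upper bounds on $\{h_n\}$ are available, and the Harnack trick of passing to the non-negative sequence $M - h_n$ is the pivotal observation that converts them into the two-sided bounds needed to extract a harmonic limit.
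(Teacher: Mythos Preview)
Your proof is correct and follows essentially the same route as the paper's: parts (i) and (ii) are dismissed as routine, and for (iii) one picks harmonic minorants $h_n \le u_n$ attaining the value $u_n(z_0)$ at a point where $u(z_0)>-\infty$, then extracts a locally uniform limit by a normal-families argument to obtain a harmonic $h\le u$ with $h(z_0)=u(z_0)$. You have simply fleshed out the details the paper leaves implicit, in particular the Harnack step that converts the one-sided bound $h_n\le u_1$ into the two-sided local bound needed for normal families, and the observation that this same estimate rules out mixed $-\infty$/finite behaviour of $u$.
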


\begin{proof}
Parts~(i) and (ii) are obvious. Part~(iii) is also clear if $u \equiv -\infty$. In the remaining case, fix a point $\lambda_0 \in \Omega$ such that $u(\lambda_0)>-\infty$, and, for each $n \in \mathbb{N}$, choose a
harmonic function $h_n$ on $\Omega$ such that $h_n \leq u_n$ and $h_n(\lambda_0)=u_n(\lambda_0)$. By a normal-families argument, a subsequence of the $h_n$ converges locally uniformly to a function $h$ harmonic on $\Omega$ such that $h \leq u$ and $h(\lambda_0)=u(\lambda_0)$. This shows that $u \in \cS(\Omega)$.
\end{proof}

The next result summarizes some properties of elements of $\cS(\Omega)$.
We write $\tau_\Omega$ for the Harnack metric on $\Omega$, defined by
$$
\tau_\Omega(\lambda,\mu):=\sup_k \frac{k(\lambda)}{k(\mu)}
\qquad(\lambda,\mu\in \Omega),
$$
where the supremum is taken over all positive harmonic functions $k$ on $\Omega$.

\begin{proposition}
\label{prop1}
Let $\Omega$ be a plane domain, let $u \in \cS(\Omega)$, and suppose that $u \not\equiv -\infty$. Then
\begin{enumerate}[\normalfont(i)]
\item $u(\lambda)>-\infty$ for all $\lambda\in \Omega$.
\item If  $u< M$ on $\Omega$, then
$$
\frac{M-u(\lambda)}{M-u(\mu)} \leq \tau_\Omega(\lambda,\mu) \qquad (\lambda, \mu \in \Omega).
$$
\item $u$ is a continuous subharmonic function on $\Omega$.
\end{enumerate}
\end{proposition}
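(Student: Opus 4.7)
The plan is to handle the three parts in order, using the earlier part as input for the later ones.

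For \emph{(i)}, I would observe that the statement is almost tautological. Since $u\not\equiv -\infty$, the family of harmonic functions appearing in condition (b) of the definition of $\cS(\Omega)$ must be non-empty. Fix any harmonic function $h$ from this family. Harmonic functions are by definition finite at every point of their domain, so $h(\lambda)>-\infty$ for all $\lambda\in\Omega$, and consequently $u(\lambda)\ge h(\lambda)>-\infty$.

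For \emph{(ii)}, suppose $u<M$ on $\Omega$ and write $u=\sup_{h\in\mathcal F} h$ as in the definition. Then each $h\in\mathcal F$ satisfies $h\le u<M$, so $M-h$ is a \emph{positive} harmonic function on $\Omega$. By definition of the Harnack metric,
$$
M-h(\lambda)\le \tau_\Omega(\lambda,\mu)\bigl(M-h(\mu)\bigr)\qquad(\lambda,\mu\in\Omega).
$$
Taking the infimum over $h\in\mathcal F$ on both sides (the factor $\tau_\Omega(\lambda,\mu)$ is independent of $h$) gives
$$
M-u(\lambda)\le \tau_\Omega(\lambda,\mu)\bigl(M-u(\mu)\bigr),
$$
which is the desired inequality.

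For \emph{(iii)}, continuity is the main point and the part where the work lies. Fix $\lambda_0\in\Omega$. Since $u$ is locally bounded above, I can choose an open disk $D$ with $\lambda_0\in D\subset\overline D\subset\Omega$ and a constant $M$ with $u<M$ on $D$. The restrictions to $D$ of the harmonic functions in the defining family for $u$ show that $u|_D\in\cS(D)$, so part (ii) applies on $D$, yielding
$$
\frac{M-u(\lambda)}{M-u(\lambda_0)}\le \tau_D(\lambda,\lambda_0)
\quad\text{and}\quad
\frac{M-u(\lambda_0)}{M-u(\lambda)}\le \tau_D(\lambda_0,\lambda).
$$
The Harnack metric on the disk $D$ satisfies $\tau_D(\lambda,\lambda_0)\to 1$ as $\lambda\to\lambda_0$ (this is a standard continuity property of the Harnack metric, which is a consequence of Harnack's inequality on concentric disks). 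Combining the two inequalities, $M-u(\lambda)\to M-u(\lambda_0)$, hence $u$ is continuous at~$\lambda_0$. Subharmonicity is then immediate from the sub-mean-value property: for any closed disk $\overline{D(\lambda_0,r)}\subset\Omega$ and any $h$ in the defining family,
$$
h(\lambda_0)=\frac1{2\pi}\int_0^{2\pi}h(\lambda_0+re^{i\theta})\,d\theta\le \frac1{2\pi}\int_0^{2\pi}u(\lambda_0+re^{i\theta})\,d\theta,
$$
and taking the supremum over $h$ gives the sub-mean-value inequality for $u$; together with the continuity just established this yields subharmonicity.

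The main obstacle is the continuity half of (iii), and the crux is simply recognizing that (ii), although stated globally, can be localized to a small disk where $u$ is bounded above, at which point the continuity of the Harnack metric of the disk closes the argument.
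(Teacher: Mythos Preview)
Your proof is correct and follows essentially the same approach as the paper: part~(i) is immediate, part~(ii) applies the Harnack metric to $M-h$ and passes to the supremum, and part~(iii) localizes~(ii) to a subdisk where $u$ is bounded above and uses $\tau_D(\lambda,\lambda_0)\to1$. Your write-up is in fact slightly more explicit than the paper's (e.g.\ writing out both Harnack inequalities to get two-sided convergence), but the ideas are identical.
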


\begin{proof}
Part (i) is obvious.

For (ii), note that if $h$ is harmonic and $h \leq u$, then $M-h$ is a positive harmonic function, so it satisfies
$$
\frac{M-h(\lambda)}{M-h(\mu)} \leq \tau_\Omega(\lambda,\mu) \qquad(\lambda,\mu\in \Omega).
$$
As $u$ is the pointwise supremum of all such $h$, it satisfies the same inequality.

For (iii), applying part~(ii) on a compact subdisk of $\Omega$, we see that $u$ is continuous there, since $\tau_\Omega(\lambda,\mu) \to 1$ as $\lambda \to \mu$, by \cite[Theorem 1.3.8]{RAN2}. It is clear that $u$ satisfies the submean property, so it is subharmonic on $\Omega$.
\end{proof}

\subsection{Proof of Theorem~\ref{mainthm1}}

\begin{theorem}
\label{thmcap}
Let $E \subset \mathbb{C}$ be compact and let $h:\mathbb{D} \times \RiemannSphere \to \RiemannSphere$ be a holomorphic motion with $h_\lambda(\infty)=\infty$ for all $\lambda \in \mathbb{D}$. Then the function
$$\lambda \mapsto \log c(E_\lambda) \qquad (\lambda \in \mathbb{D})$$
belongs to $\cS(\mathbb{D})$.
\end{theorem}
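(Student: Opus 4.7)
The plan is to use the Fekete--Szeg\H{o} characterization of logarithmic capacity (Theorem~\ref{Fekete}) to realize $\log c(E_\lambda)$ as a decreasing limit of functions in $\cS(\mathbb{D})$, and then invoke the stability properties from Proposition~\ref{propstab}.

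\textbf{Step 1: Harmonicity of the building blocks.} Fix $n \geq 2$ and an $n$-tuple $\mathbf{w} = (w_1, \dots, w_n) \in E^n$ of distinct points. Since $h$ is a holomorphic motion of $\RiemannSphere$, for each $j < k$ the map $\lambda \mapsto h_\lambda(w_j) - h_\lambda(w_k)$ is holomorphic on $\mathbb{D}$, takes values in $\mathbb{C} \setminus \{0\}$ (by injectivity of each $h_\lambda$), and is different from $\infty$ (since $h_\lambda(\infty) = \infty$ and neither $w_j$ nor $w_k$ equals $\infty$). Therefore
$$
H_{\mathbf{w}}(\lambda) := \frac{2}{n(n-1)} \sum_{j<k} \log\bigl|h_\lambda(w_j) - h_\lambda(w_k)\bigr|
$$
is a harmonic function of $\lambda \in \mathbb{D}$.

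\textbf{Step 2: Identifying $\log \delta_n(E_\lambda)$ as an element of $\cS(\mathbb{D})$.} Taking logarithms in the definition of $\delta_n$ gives
$$
\log \delta_n(E_\lambda) = \sup_{\mathbf{w} \in E^n} H_{\mathbf{w}}(\lambda),
$$
where the supremum may be restricted to $n$-tuples of distinct points (equality on coincidences gives $-\infty$ anyway). To apply Proposition~\ref{propstab}(ii), I need local boundedness above. By the $\lambda$-lemma, $h$ is jointly continuous, so for any compact $K \subset \mathbb{D}$ the set $\{h_\lambda(z) : \lambda \in K,\ z \in E\} \subset \mathbb{C}$ is compact, hence contained in some disk of radius $R_K$. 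Then $H_{\mathbf{w}}(\lambda) \leq \log(2 R_K)$ for every $\lambda \in K$ and every $\mathbf{w}$, which gives the required local bound. Hence $\log \delta_n(E_\lambda) \in \cS(\mathbb{D})$.

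\textbf{Step 3: Passage to the limit.} By Theorem~\ref{Fekete}, $\delta_n(E_\lambda) \downarrow c(E_\lambda)$ for each fixed $\lambda$, so $\log \delta_n(E_\lambda) \downarrow \log c(E_\lambda)$ pointwise on $\mathbb{D}$. Applying Proposition~\ref{propstab}(iii) to this decreasing sequence concludes that $\log c(E_\lambda) \in \cS(\mathbb{D})$, as desired.

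\textbf{Expected difficulty.} No step is particularly delicate; the two small points to watch are (a) ensuring the holomorphic functions $\lambda \mapsto h_\lambda(w_j) - h_\lambda(w_k)$ omit the values $0$ and $\infty$, so their moduli have genuinely harmonic logarithms (both conditions come directly from the axioms of a holomorphic motion fixing $\infty$), and (b) establishing the local upper bound in Step~2, which is where the $\lambda$-lemma (joint continuity) is essential. Given these, the argument is a direct application of the machinery set up in Proposition~\ref{propstab} together with Fekete--Szeg\H{o}.
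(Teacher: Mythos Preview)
Your proposal is correct and follows essentially the same route as the paper: represent $\log\delta_n(E_\lambda)$ as a pointwise supremum of the harmonic functions $\lambda\mapsto\frac{2}{n(n-1)}\sum_{j<k}\log|h_\lambda(w_j)-h_\lambda(w_k)|$, use joint continuity from the $\lambda$-lemma for local boundedness, and then apply Proposition~\ref{propstab}(iii) to the decreasing limit coming from Fekete--Szeg\H{o}. The only cosmetic difference is that the paper disposes of the case of finite $E$ separately at the outset, whereas you absorb it into the convention that a supremum over an empty family of harmonic functions is $-\infty$; either way the argument goes through.
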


\begin{proof}
If $E$ is a finite set, then so is $E_\lambda$ for all $\lambda$, and $\log c(E_\lambda)\equiv-\infty$.
So we may assume at the outset that $E$ is infinite.

Let $n\ge2$ and let $z_1,\dots,z_n$ be distinct points of $E$. Then the map
$$
\lambda \mapsto \prod_{\substack{j<k}} \left(h_\lambda(z_j)-h_\lambda(z_k)\right)
$$
is holomorphic and non-vanishing on $\mathbb{D}$, so the logarithm of its modulus is harmonic there.
It follows that the function
$$\lambda \mapsto \log \delta_n(E_\lambda) \qquad (\lambda \in \mathbb{D})$$
is the supremum of a family of harmonic functions on $\mathbb{D}$. It is also locally bounded above on $\mathbb{D}$,
since $h$ is jointly continuous in $(\lambda,z)$, by the $\lambda$-lemma. Therefore the function
$$ \lambda \mapsto \log \delta_n(E_\lambda) \qquad (\lambda \in \mathbb{D})$$
belongs to $\cS(\mathbb{D})$. Since $\delta_n(E_\lambda)\downarrow c(E_\lambda)$ by Theorem \ref{Fekete}, we deduce that the function
$$ \lambda \mapsto \log c(E_\lambda) \qquad (\lambda \in \mathbb{D})$$
also belongs to $\cS(\mathbb{D})$, by part (iii) of Proposition \ref{propstab}.
\end{proof}

\begin{proof}[Completion of the proof of Theorem \ref{mainthm1}]
By Theorem \ref{thmcap}, the function
$$ \lambda \mapsto \log c(E_\lambda) \qquad (\lambda \in \mathbb{D})$$
belongs to $\cS(\mathbb{D})$.
Hence, either it is identically $-\infty$,
or it is everywhere finite, and then it is continuous and subharmonic on $\mathbb{D}$,
by part (iii) of Proposition \ref{prop1}. In all cases, the function
$$ \lambda \mapsto c(E_\lambda) \qquad (\lambda \in \mathbb{D})$$
is continuous, as required.
\end{proof}


\section{Analytic capacity and the proof of Theorem~\ref{mainthm2}}
\label{ancap}

The proof of Theorem~\ref{mainthm2} uses ideas from dynamical systems and the theory of quasiconformal mappings. We begin by describing the necessary background.

\subsection{Quasiconformal mappings}

\begin{definition}
Let $U,V$ be domains in $\RiemannSphere$. An orientation-preserving homeomorphism $f:U \to V$ is \textit{quasiconformal} if $f$ belongs to the Sobolev space $W^{1,2}_{loc}(U)$ and satisfies the Beltrami equation
$$
\frac{\partial f}{\partial \overline{z}}=\mu \, \frac{\partial f}{\partial z}
$$
almost everywhere, for some measurable function $\mu$ on $U$ with $\|\mu\|_{\infty} <1$ (called the \textit{Beltrami coefficient} of $f$).
\end{definition}
A quasiconformal mapping $f:U \to V$ is conformal if and only if its Beltrami coefficient is zero almost everywhere on $U$. This is usually referred to as Weyl's lemma. We will also need the following fundamental result on the existence and uniqueness of solutions to the Beltrami equation.

\begin{theorem}[Measurable Riemann mapping theorem]
\label{MRMT}
Let $U$ be a domain in $\RiemannSphere$ and let $\mu:U \to \mathbb{D}$ be a measurable function with $\|\mu\|_\infty <1$. Then there exists a quasiconformal mapping $f$ on $U$ such that
$$
\frac{\partial f}{\partial \overline{z}}=\mu \, \frac{\partial f}{\partial z}
$$
almost everywhere. Moreover, the map $f$ is unique up to postcomposition by a conformal map.
\end{theorem}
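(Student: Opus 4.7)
My plan is to follow the classical Ahlfors--Bers strategy. The first move is to reduce to the case $U=\mathbb{C}$: extend $\mu$ by $0$ on $\RiemannSphere\setminus U$, produce a global quasiconformal $f:\RiemannSphere\to\RiemannSphere$ solving the Beltrami equation with the extended coefficient, and restrict to $U$. For uniqueness, I would invoke the composition law for Beltrami coefficients together with Weyl's lemma.

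\emph{Existence.} After the reduction I look for $f$ on $\mathbb{C}$ normalized by $f(z)-z\to0$ as $z\to\infty$, in the form $f(z)=z+Ch(z)$, where $Ch(z)=-\frac{1}{\pi}\int_{\mathbb{C}}\frac{h(w)}{w-z}\,dA(w)$ is the Cauchy transform of an unknown density $h$. Then $\partial_{\bar z}f=h$ and $\partial_z f=1+Th$, where $T$ is the Beurling transform (principal-value convolution with $-1/(\pi z^2)$). The Beltrami equation becomes the singular integral equation
\[
(I-\mu T)h=\mu.
\]
The technical engine is that $T$ is an isometry on $L^2(\mathbb{C})$ and, by Calder\'on--Zygmund theory combined with interpolation, $\|T\|_{L^p}\to1$ as $p\to2$. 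Hence for $k:=\|\mu\|_\infty<1$ one can choose $p>2$ with $k\|T\|_{L^p}<1$, so $I-\mu T$ is invertible on $L^p(\mathbb{C})$ by Neumann series, producing $h\in L^p$ and in turn $f\in W^{1,p}_{\mathrm{loc}}$ satisfying the equation almost everywhere.

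\emph{Homeomorphism and uniqueness.} To promote this $f$ to a quasiconformal homeomorphism, I would approximate $\mu$ by smooth, compactly supported $\mu_n$ with $\|\mu_n\|_\infty\le k$, for which classical elliptic PDE theory yields diffeomorphic solutions $f_n$. Stability of the integral equation forces $f_n\to f$ locally uniformly, and standard quasiconformal compactness then transfers injectivity, orientation, and Jacobian positivity to the limit. For uniqueness, if $f_1,f_2:U\to V_i$ both solve the equation, the chain rule for Beltrami coefficients shows that $f_2\circ f_1^{-1}$ has coefficient zero almost everywhere, so Weyl's lemma makes it conformal; equivalently, $f_2$ differs from $f_1$ only by postcomposition with a conformal map.

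\emph{Main obstacle.} The genuine crux is the existence step: one needs the sharp $L^p$-boundedness of the Beurling transform in a neighbourhood of $p=2$, for otherwise the Neumann series converges only under a smallness assumption on $\|\mu\|_\infty$ rather than the full hypothesis $\|\mu\|_\infty<1$. A close second is the passage from the weak $W^{1,p}_{\mathrm{loc}}$ solution to a bona fide homeomorphism, which rests on the regularity theory of quasiconformal maps (area distortion, Hurwitz-style injectivity in the limit) and is what makes the approximation argument delicate.
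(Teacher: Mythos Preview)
Your outline is a faithful sketch of the classical Ahlfors--Bers proof of the measurable Riemann mapping theorem, and the identified cruxes (the $L^p$-boundedness of the Beurling transform near $p=2$, and the upgrade from a $W^{1,p}_{\mathrm{loc}}$ solution to a genuine homeomorphism via approximation) are exactly the right ones. There is nothing mathematically wrong with the plan.

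However, there is nothing to compare it to: the paper does not prove this theorem. It is quoted in Section~\ref{ancap} as a standard background result from the theory of quasiconformal mappings, with the reader implicitly referred to \cite{LEH} and \cite{AIM}, and is then used as a black box in the construction of Proposition~\ref{confweld}. So your proposal is not ``the same as'' or ``different from'' the paper's proof---the paper simply has none. If your task was to supply a proof where the paper gives one, this statement is not such a place; if your task was to fill in an omitted proof, your sketch is the standard one and is correct in outline.
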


For more information on quasiconformal mappings, we refer the reader to \cite{LEH}.

Recall from Theorem \ref{thmSlod} that every holomorphic motion extends to a holomorphic motion $h:\mathbb{D} \times \RiemannSphere \to \RiemannSphere$ of the whole sphere, and that $h$ is jointly continuous in $(\lambda,z)$, by the $\lambda$-lemma. In particular, the maps $h_\lambda:\RiemannSphere \to \RiemannSphere$ are homeomorphisms. In fact, much more is true: the $h_\lambda$ are even quasiconformal. There is a sense in which this tells the complete story. This is made precise by the following result.

\begin{theorem}[Bers--Royden \cite{BR}]
\label{thmBR}
Let $h:\mathbb{D} \times \RiemannSphere \to \RiemannSphere$. The following are equivalent:
\begin{enumerate}[\normalfont(i)]
\item The map $h$ is a holomorphic motion.
\item For each $\lambda \in \mathbb{D}$, the map $h_\lambda : \RiemannSphere \to \RiemannSphere$ is quasiconformal with Beltrami coefficient $\mu_\lambda$ satisfying $\|\mu_\lambda\|_\infty \leq |\lambda|$. Moreover, the map $h_0$ is the identity, and the $L^\infty(\mathbb{C})$-valued map $\lambda \mapsto \mu_\lambda$ is holomorphic on $\mathbb{D}$.
\end{enumerate}
\end{theorem}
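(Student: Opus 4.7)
The plan is to prove the two directions of the equivalence separately. The direction (ii) $\Rightarrow$ (i) is essentially a parametric application of the Measurable Riemann Mapping Theorem, while the converse (i) $\Rightarrow$ (ii) is the more substantial content and splits naturally into a quantitative quasiconformality estimate and a holomorphic dependence statement.

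For (ii) $\Rightarrow$ (i): Given a holomorphic $L^\infty$-valued family $\lambda \mapsto \mu_\lambda$ with $\|\mu_\lambda\|_\infty \le |\lambda|$, I would apply Theorem~\ref{MRMT} for each fixed $\lambda$ and normalize the resulting quasiconformal map $h_\lambda$ by postcomposing with a Möbius transformation so that three chosen points of $\RiemannSphere$ are held fixed. The parametric version of Theorem~\ref{MRMT} (due to Ahlfors--Bers) then yields that the normalized solution $h_\lambda(z)$ is holomorphic in $\lambda$ for each fixed $z$, which is condition (i) of a holomorphic motion. Injectivity of each $h_\lambda$ is immediate from its being a homeomorphism, and $h_0 = \mathrm{id}$ follows from $\mu_0 \equiv 0$ combined with the uniqueness clause of MRMT.

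For (i) $\Rightarrow$ (ii), the first task is to establish that each $h_\lambda$ is quasiconformal with dilatation $K(\lambda) \le (1+|\lambda|)/(1-|\lambda|)$, which is equivalent to $\|\mu_\lambda\|_\infty \le |\lambda|$. The tool is the cross-ratio: given four distinct points $z_1,z_2,z_3,z_4 \in \RiemannSphere$, the function
$$\lambda \mapsto [h_\lambda(z_1), h_\lambda(z_2), h_\lambda(z_3), h_\lambda(z_4)]$$
is holomorphic on $\mathbb{D}$ and avoids $\{0,1,\infty\}$ since each $h_\lambda$ is injective. Lifting to the universal cover $\mathbb{D}$ of the thrice-punctured sphere and applying the Schwarz--Pick lemma produces, at any fixed $\lambda_0 \in \mathbb{D}$, a pointwise distortion estimate controlled by the hyperbolic distance $\rho_\mathbb{D}(0,\lambda_0) = \tfrac{1}{2}\log\frac{1+|\lambda_0|}{1-|\lambda_0|}$. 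Translating cross-ratio distortion into a dilatation bound via the modulus of quadrilaterals then yields the desired estimate on $K(\lambda_0)$.

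It remains to show that $\lambda \mapsto \mu_\lambda$ is holomorphic as an $L^\infty$-valued map. Since the family is already uniformly bounded by the previous step, it suffices (by Dunford's theorem on weak versus strong holomorphy) to verify weak-$*$ holomorphy. Pairing $\mu_\lambda = \bar\partial h_\lambda / \partial h_\lambda$ against a test function $\varphi \in L^1(\mathbb{C})$ with compact support, using that $h_\lambda(z)$ is pointwise holomorphic in $\lambda$, and invoking standard $L^p$-bounds for the Beurling transform, reduces the claim to the holomorphy of a Cauchy-type integral in $\lambda$, which follows by Morera's theorem. The main obstacle I anticipate is recovering the sharp constant $|\lambda|$ in the bound on $\|\mu_\lambda\|_\infty$; a soft cross-ratio argument yields only qualitative quasiconformality with some larger comparable dilatation bound, and extracting the optimal constant requires the sharp form of the Schwarz--Pick inequality on the thrice-punctured sphere together with the precise Teichmüller inequality linking cross-ratio distortion to quadrilateral moduli. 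This quantitative improvement over the classical $\lambda$-lemma is the essential content of the Bers--Royden theorem.
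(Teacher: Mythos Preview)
The paper does not prove this theorem; it is quoted as a background result from Bers--Royden~\cite{BR}, accompanied only by the one-line remark that the implication (ii) $\Rightarrow$ (i) ``actually follows from the fact that solutions of the Beltrami equation depend analytically on the parameter.'' Your sketch of (ii) $\Rightarrow$ (i) via the parametric Ahlfors--Bers form of Theorem~\ref{MRMT} is exactly that remark. For (i) $\Rightarrow$ (ii) the paper offers nothing beyond the citation, so there is no in-paper proof to compare against; your cross-ratio/Schwarz--Pick outline is the standard Bers--Royden argument, and you have correctly identified that obtaining the sharp bound $\|\mu_\lambda\|_\infty \le |\lambda|$ (as opposed to mere quasiconformality with some dilatation depending on $|\lambda|$) is the substantive step.

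One caveat on your (ii) $\Rightarrow$ (i) paragraph: you phrase it as constructing $h_\lambda$ from $\mu_\lambda$ and then normalizing, but in the statement as written the maps $h_\lambda$ are given as data. Taken literally, (ii) does not force holomorphic dependence of $h_\lambda(z)$ on $\lambda$ without a normalization hypothesis (for instance $h_\lambda(z)=z+\bar\lambda$ satisfies (ii) with $\mu_\lambda\equiv 0$ yet is not a holomorphic motion). The version you actually prove---holomorphically varying $\mu_\lambda$ plus a three-point normalization yields a holomorphic motion---is the correct and useful one, and it is precisely how the paper invokes the theorem in Proposition~\ref{confweld}(iii). Just be aware of the small gap between the literal equivalence stated and the normalized statement your argument establishes.
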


The converse implication in Theorem \ref{thmBR} actually follows from the fact that solutions of the Beltrami equation depend analytically on the parameter, see e.g. \cite[Section 5.7]{AIM}. For more information on holomorphic motions, we refer the reader to \cite{AM} and \cite[Chapter 12]{AIM}.

\subsection{Quadratic Julia sets}\label{S:Julia}
For a quadratic polynomial $p_c(z):=z^2+c$, we define the \textit{filled Julia set} $\mathcal{K}_c$ of $p_c$ by
$$\mathcal{K}_c:=\{z \in \mathbb{C} : p_c^m(z) \nrightarrow \infty \,\, \mbox{as}\,\, m \to \infty\},$$
where $p_c^m$ denotes the $m$-th iterate of the polynomial $p_c$. The \textit{Julia set} $\mathcal{J}_c$ of $p_c$ is defined as the boundary of the filled Julia set:
$$\mathcal{J}_c:=\partial \mathcal{K}_c,$$
and the \textit{Mandelbrot set} $\mathcal{M}$ is defined as the set of all parameters $c \in \mathbb{C}$ for which the orbit of $0$ under $p_c$ remains bounded:
$$\mathcal{M}:= \{c \in \mathbb{C} : p_c^m(0) \nrightarrow \infty \,\, \mbox{as}\,\, m \to \infty\}.$$
Note that a parameter $c$ belongs to $\mathcal{M}$ if and only if the corresponding Julia set $\mathcal{J}_c$ is connected. For $c \notin \mathcal{M}$, the Julia set $\mathcal{J}_c$ is a Cantor set.
For background on Julia sets and the Mandelbrot set, the reader may consult \cite{MCM} and \cite{MIL}.

We will be mainly interested in the \textit{main cardioid} $\mathcal{M}_0$ of the Mandelbrot set, defined as the set of all parameters $c$ for which the polynomial $p_c$ has an attracting fixed point. It is easy to see that $\mathcal{M}_0$ contains $\mathbb{D}(0,1/4)$, the open disk centered at the origin of radius $1/4$.
For each $c\in \mathcal{M}_0$, the Julia set $\mathcal{J}_c$ is a quasicircle.

It is well known that quadratic Julia sets with parameter belonging to the main cardioid $\mathcal{M}_0$ move holomorphically. More precisely, for each $c \in \mathcal{M}_0$, there is a unique conformal map $B_c:\RiemannSphere \setminus \overline{\mathbb{D}} \to \RiemannSphere\setminus \mathcal{K}_c$ with normalization $B_c(z) = z + O(1/z)$ at $\infty$, called the \textit{B\"{o}ttcher map}. The map $B(c,z):=B_c(z)$ is holomorphic in both variables. Note that $B_0$ is the identity since $\mathcal{J}_0$ is the unit circle.

In order to work on the unit disk rather than $\mathbb{D}(0,1/4)$, we make the change of variable $c=\lambda/4$.
For $\lambda\in{\mathbb D}$, we denote by $\Omega_\lambda^-$ and $\Omega_\lambda^+$
the bounded and unbounded components of $\RiemannSphere \setminus \mathcal{J}_{\lambda/4}$ respectively.
Then the map $g: \mathbb{D} \times (\RiemannSphere \setminus \overline{\mathbb{D}}) \to \RiemannSphere$ defined by $g(\lambda,z):=B_{\lambda/4}(z)$ gives a holomorphic motion of $\RiemannSphere \setminus \overline{\mathbb{D}}$. By Theorems~\ref{thmSlod} and  \ref{thmBR}, this holomorphic motion extends to a holomorphic motion $g:\mathbb{D} \times \RiemannSphere \to \RiemannSphere$ of the whole sphere and, for each $\lambda \in \mathbb{D}$, the map $g_\lambda$ is a quasiconformal homeomorphism of $\RiemannSphere$ which maps $\RiemannSphere \setminus \overline{\mathbb{D}}$ conformally onto $\Omega_{\lambda}^+$.

We will also need the following classical result of Fatou on tangent points of quasicircle Julia sets.

\begin{theorem}[Fatou]
\label{TheoremFatou}
There exists $\lambda_0$ such that for all $\lambda \in \mathbb{D}$ with $0<|\lambda|<\lambda_0$, the corresponding quasicircle Julia set $\mathcal{J}_{\lambda/4}$ has no tangent point.
\end{theorem}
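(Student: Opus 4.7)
The plan is to combine a direct check that the round-circle case for $\mathcal{J}_c$ occurs only at $c=0$ with Fatou's classical rigidity theorem, which states that a polynomial Julia set which is a Jordan curve with a tangent at some point must be a round circle. Granting the rigidity theorem, the result reduces to showing that $\mathcal{J}_{\lambda/4}$ is not a round circle for $\lambda\in\mathbb{D}\setminus\{0\}$ with $|\lambda|$ small.

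To rule out the circle case, I would assume $\mathcal{J}_c=\{|z-a|=r\}$ and use the invariance $p_c(\mathcal{J}_c)=\mathcal{J}_c$. Writing $z = a + re^{i\theta}$ gives $p_c(z) - a = A + Be^{i\theta} + Ce^{2i\theta}$ with $A=a^2-a+c$, $B=2ar$, $C=r^2$, and requiring $|p_c(z)-a|^2=r^2$ identically in $\theta$ is the vanishing of every nonzero Fourier mode. The $e^{\pm 2i\theta}$ mode forces $A=0$, the $e^{\pm i\theta}$ mode then forces $B=0$ and hence $a=0$; substituting back gives $c=0$, and the constant mode gives $r=1$. Thus $\mathcal{J}_c$ is a round circle iff $c=0$. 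Since for $|\lambda|<1$ the parameter $c=\lambda/4$ lies in $\mathcal{M}_0$, the Julia set $\mathcal{J}_{\lambda/4}$ is a Jordan curve but not a round circle for $\lambda\ne 0$, and Fatou's rigidity then yields the conclusion with, say, $\lambda_0=1$.

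The substantive step is Fatou's theorem itself. Suppose $z_0\in\mathcal{J}_c$ is a tangent point. Since $c\in\mathcal{M}_0$, the critical orbit $\{p_c^n(0)\}$ is attracted to the attracting fixed point and in particular misses $\mathcal{J}_c$, so every iterate and every inverse branch of $p_c$ is conformal at every point of $\mathcal{J}_c$. Forward propagation shows each $p_c^n(z_0)$ is a tangent point, and each preimage in $\bigcup_n p_c^{-n}(z_0)$ is a tangent point; these preimages are dense in $\mathcal{J}_c$. Fix a repelling periodic point $\zeta$ of period $k$ with multiplier $\mu=(p_c^k)'(\zeta)$. Koenigs' theorem provides a local conformal linearizer $\phi$ with $\phi\circ p_c^k=\mu\phi$, so in $\phi$-coordinates $\mathcal{J}_c$ is locally invariant under the expansion $z\mapsto\mu z$. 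The main obstacle is then the promotion step: combining the density of tangent points with this self-similarity at scale $|\mu|>1$ to upgrade the cone condition at a tangent into local real-analyticity of $\mathcal{J}_c$ near $\zeta$. Once one has local analyticity, invariance under $p_c$ spreads it to all of $\mathcal{J}_c$, producing a $p_c$-invariant analytic Jordan curve, which by Schwarz reflection combined with the circle computation above must be $\mathbb{T}$, forcing $c=0$ and completing the contradiction.
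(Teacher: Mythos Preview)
The paper does not prove this statement at all; it is quoted as a classical result and dispatched with a bare citation to Steinmetz's book. So there is no ``paper's own proof'' to compare against, and your proposal is really an attempt to supply what the paper omits.

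Your reduction is sound. The Fourier computation showing that $\mathcal{J}_c$ is a round circle only for $c=0$ is correct, and once one grants Fatou's rigidity theorem (a Jordan-curve polynomial Julia set with a tangent point is a circle), the conclusion follows immediately---indeed with $\lambda_0=1$, stronger than the mere existence of some $\lambda_0$ asserted in the statement. As a black-box deduction this is fine and matches how the result is used in the literature.

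Where your write-up falls short is in the self-contained proof of the rigidity theorem. You yourself flag the ``promotion step''---going from tangents at a dense set of points plus linearized self-similarity at a repelling periodic point to local real-analyticity of the curve---as the main obstacle, and you do not carry it out; that step is genuinely the heart of Fatou's argument and cannot be waved through. There is also a second gap at the end: having an analytic $p_c$-invariant Jordan curve, you invoke ``Schwarz reflection combined with the circle computation above'' to conclude it must be $\mathbb{T}$, but your circle computation only says \emph{if} the curve is a circle then $c=0$; it does not by itself force an analytic invariant curve to be a circle. The standard way to close this is to use Schwarz reflection to extend the B\"ottcher conjugacy across $\mathcal{J}_c$, obtain a global conformal conjugacy between $p_c$ and $z\mapsto z^2$, observe that such a conjugacy must be affine, and then compute $c=0$. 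Without that step the final implication is unjustified.
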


See \cite[Chapter 5, Section 3, Theorem 1]{STE}. We also mention the related result of Hamilton in \cite[Theorem 2]{HAM}, who proved that the set of tangent points of a Jordan curve rational Julia set always has zero $1$-dimensional Hausdorff measure, unless the Julia set is a circle.

\subsection{Singular harmonic measures}\label{S:sing}

For a Jordan curve $\Gamma$ in $\mathbb{C}$ separating $0$ from~$\infty$, denote by $\Omega^-$ and $\Omega^+$ the bounded and unbounded components of $\RiemannSphere \setminus \Gamma$ respectively. Let $\omega^-, \omega^+$ be the harmonic measures of the domains $\Omega^-,\Omega^+$ with respect to $0,\infty$ respectively. The following result of Bishop--Carleson--Garnett--Jones gives a geometric necessary and sufficient condition for the harmonic measures $\omega^-$ and $\omega^+$ to be mutually singular.

\begin{theorem}[Bishop--Carleson--Garnett--Jones \cite{BCGJ}]
\label{TheoremBCGJ}
The harmonic measures $\omega^-$ and $\omega^+$ are mutually singular if and only if the set of tangent points of $\Gamma$ has zero $1$-dimensional Hausdorff measure.
\end{theorem}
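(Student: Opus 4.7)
The plan is to reduce the equivalence to McMillan's cone-point theorem from the theory of harmonic measure on simply connected Jordan domains. That theorem asserts that for a simply connected Jordan domain $\Omega$ with harmonic measure $\omega$, the set $C(\Omega)$ of \emph{cone points} of $\Omega$ (boundary points $p$ such that some truncated open cone with vertex at $p$ lies inside $\Omega$) carries full $\omega$-mass, and moreover $\omega$ restricted to cone points of uniformly bounded opening is absolutely continuous with respect to $\mathcal{H}^1$. Applied simultaneously to $\Omega^-$ and $\Omega^+$, this provides Borel sets $C^\pm \subset \Gamma$ such that $\omega^\pm$ is concentrated on $C^\pm$ and $\omega^\pm|_{C^\pm} \ll \mathcal{H}^1$.

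The geometric bridge to the tangent set $T$ is the essential identification
$$T = C^- \cap C^+$$
modulo sets of $\mathcal{H}^1$-measure zero. If $p \in T$ has tangent line $L$, then near $p$ the curve $\Gamma$ lies inside an arbitrarily thin lens around $L$, so each of $\Omega^\pm$ contains a sector at $p$ whose opening tends to $\pi$. Conversely, if $p$ is a cone point from both sides with opening angles close enough to $\pi$, the curve is squeezed between two wide cones and must have a tangent there; the rigorous argument proceeds through a diagonal selection over a countable family of opening-axis pairs with quantitative control on the radii.

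With these two inputs in hand both directions follow quickly. If $\mathcal{H}^1(T) > 0$, a local computation near any tangent point, where $\Omega^\pm$ each look like half-planes and harmonic measure reduces (up to conformal distortion) to Lebesgue measure along the tangent line, shows that the Radon-Nikodym densities $d\omega^\pm/d\mathcal{H}^1|_T$ are positive, so $\omega^\pm(T) > 0$ and the two restrictions $\omega^\pm|_T$ are mutually absolutely continuous; hence $\omega^- \not\perp \omega^+$. If instead $\mathcal{H}^1(T) = 0$, the absolute continuity on cone points gives $\omega^\pm(T) = 0$, so $\omega^-$ is concentrated on $C^- \setminus T$ and $\omega^+$ is concentrated on $C^+ \setminus T$; by the identification above these two supports are essentially disjoint, so $\omega^- \perp \omega^+$.

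The main obstacle is establishing the geometric identity $T = C^- \cap C^+$ modulo $\mathcal{H}^1$-null sets, which demands a quantitative two-sided cone argument together with a covering lemma in order to pass from "cone points with opening $\pi - \epsilon$ for each fixed $\epsilon$" to genuine tangent points. Beyond that, McMillan's theorem and its refinement to absolute continuity on cone points are deep prerequisites; a self-contained treatment would need to develop these via boundary behavior of conformal maps, for example through Plessner-type theorems and a local version of the F. and M. Riesz theorem.
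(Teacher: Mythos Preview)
The paper does not prove this theorem at all: it is quoted from \cite{BCGJ} and used as a black box, so there is no ``paper's own proof'' to compare your proposal against.

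Assessing your sketch on its own merits, there is a genuine gap. Your version of McMillan's theorem is misstated: McMillan does \emph{not} assert that the cone-point set $C(\Omega)$ carries full harmonic measure. What McMillan proves is that $\omega$-almost every boundary point is either a cone point or a \emph{twist point}, and that on the cone-point set $\omega$ and $\mathcal{H}^1$ are mutually absolutely continuous. For many Jordan domains (the von~Koch snowflake, or indeed the quasicircle Julia sets $\mathcal{J}_{\lambda/4}$ relevant to this paper) the cone-point set is empty while $\omega$ is a perfectly good probability measure supported entirely on twist points. Consequently your step ``$\omega^-$ is concentrated on $C^-\setminus T$ and $\omega^+$ is concentrated on $C^+\setminus T$'' fails, and with it the entire argument for the implication $\mathcal{H}^1(T)=0 \Rightarrow \omega^-\perp\omega^+$.

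This is not a technicality: handling the twist-point set is precisely the new content of the Bishop--Carleson--Garnett--Jones paper. Their argument shows, roughly, that on the set of twist points of $\Omega^-$ the measure $\omega^+$ must already be singular with respect to $\omega^-$ (and symmetrically), so that any non-singular common part of $\omega^-$ and $\omega^+$ is forced to live on two-sided cone points, hence on~$T$. Your outline skips this step entirely. The forward direction ($\mathcal{H}^1(T)>0 \Rightarrow \omega^-\not\perp\omega^+$) is closer to correct, though note that it needs the \emph{mutual} absolute continuity $\mathcal{H}^1|_{C(\Omega)}\ll\omega$ from McMillan, not just the one-sided $\omega|_{C(\Omega)}\ll\mathcal{H}^1$ that you stated.
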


Combining Theorem \ref{TheoremFatou} with Theorem \ref{TheoremBCGJ}, we deduce that, for all $\lambda \in \mathbb{D}$ with $0<|\lambda|<\lambda_0$, the corresponding harmonic measures $\omega_{\lambda}^-$ and $\omega_{\lambda}^+$ are mutually singular. Here $\omega_{\lambda}^-, \omega_{\lambda}^+$ denote the harmonic measures of the domains $\Omega_{\lambda}^-,\Omega_{\lambda}^+$ with respect to $0,\infty$ respectively.

\subsection{Conformal welding}
As mentioned in the introduction, the construction in Theorem \ref{mainthm2} is based on the notion of conformal welding. Let $\Gamma \subset \mathbb{C}$ be a Jordan curve and, as before, denote by $\Omega^-$ and $\Omega^+$ the bounded and unbounded components of $\RiemannSphere \setminus \Gamma$ respectively. If $f:\mathbb{D} \to \Omega^-$ and $g:\RiemannSphere \setminus \overline{\mathbb{D}} \to \Omega^+$ are conformal maps, then both $f$ and $g$ extend to homeomorphisms on the closure of their respective domains. The composition $f^{-1} \circ g : \mathbb{T} \to \mathbb{T}$ defines a homeomorphism of the unit circle onto itself called a \textit{conformal welding} of the curve $\Gamma$. It is uniquenely determined by $\Gamma$ up to precomposition and postcomposition by automorphisms of~$\mathbb{D}$.

\subsection{Proof of Theorem~\ref{mainthm2}}
We prove Theorem \ref{mainthm2} by constructing a compact set $E \subset \mathbb{C}$ and a holomorphic motion $h:\mathbb{D} \times \RiemannSphere \to \RiemannSphere$ such that $h_\lambda(\infty)=\infty$, but the function
$$\lambda \mapsto \gamma(E_\lambda) \qquad (\lambda \in \mathbb{D})$$
is not continuous. We shall in fact construct $E$ and $h$ such that $E_\lambda \subset \mathbb{T}$ for all $\lambda \in \mathbb{D} \cap \mathbb{R}$.

Recall from \S\ref{S:Julia} above that the B\"{o}ttcher maps give a holomorphic motion $g: \mathbb{D} \times \RiemannSphere \to \RiemannSphere$ such that, for each $\lambda \in \mathbb{D}$, the map $g_\lambda(z)=g(\lambda,z)$ is a quasiconformal mapping of $\RiemannSphere$ which maps $\RiemannSphere \setminus \overline{\mathbb{D}}$ conformally onto $\Omega_{\lambda}^+$, with normalization $g_\lambda(z) = z + O(1/z)$ at $\infty$. Here $\Omega_{\lambda}^-, \Omega_{\lambda}^+$ denote the bounded and unbounded components of $\RiemannSphere \setminus \mathcal{J}_{\lambda/4}$ respectively.

Now, for $\lambda \in \mathbb{D}$, let $\nu_\lambda$ be the Beltrami coefficient of the quasiconformal mapping $g_\lambda$, and define another Beltrami coefficient $\mu_\lambda$ on $\RiemannSphere$ by
\[
\mu_\lambda(z) :=
\begin{cases}
\nu_\lambda(z), & \textrm{if $|z|<1$},\\
0, & \textrm{if $|z|=1$},\\
\overline{\nu_{\overline{\lambda}} \left(\frac{1}{\overline{z}} \right)} \frac{z^4}{|z|^4}, & \textrm{if $|z|>1$}.
\end{cases}
\]
By the measurable Riemann mapping theorem (Theorem \ref{MRMT}), there is a unique quasiconformal mapping $h_\lambda : \RiemannSphere \to \RiemannSphere$ fixing $0,1,\infty$ whose Beltrami coefficient is equal to $\mu_\lambda$ almost everywhere on $\RiemannSphere$. Note that $h_0$ is the identity.

\begin{proposition}
\label{confweld}
The maps $h_\lambda: \RiemannSphere \to \RiemannSphere$ have the following properties.
\begin{enumerate}[\normalfont(i)]
\item For each $\lambda \in \mathbb{D} \cap \mathbb{R}$, we have $h_\lambda(\mathbb{D})=\mathbb{D}$.
\item For each $\lambda \in \mathbb{D} \cap \mathbb{R}$, the restriction $h_\lambda: \mathbb{T} \to \mathbb{T}$ is a conformal welding of $\mathcal{J}_{\lambda/4}$.
\item The map $h:\mathbb{D} \times \RiemannSphere \to \RiemannSphere$ defined by
$$h(\lambda,z):=h_\lambda(z) \qquad (\lambda \in \mathbb{D}, z \in \RiemannSphere)$$
is a holomorphic motion of $\RiemannSphere$ with $h(\lambda,\infty)=\infty$ for each $\lambda \in \mathbb{D}$.

\end{enumerate}

\end{proposition}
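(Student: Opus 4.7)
The plan is to establish the three parts in the natural order, the main work being the symmetry argument for (i), with (ii) and (iii) then following from standard facts about the Beltrami equation.

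For part (i), the key is that $\mu_\lambda$ is invariant, for real $\lambda$, under the pullback by the circle reflection $J(z):=1/\overline{z}$. Specifically, a direct chain-rule calculation shows that for any quasiconformal map $f$, the conjugate $J\circ f\circ J$ has Beltrami coefficient
$$
z\mapsto \overline{\mu_f(1/\overline{z})}\cdot \frac{z^4}{|z|^4}.
$$
Applied with $f=h_\lambda$ and using the piecewise definition of $\mu_\lambda$, this reads $\mu_{J\circ h_\lambda\circ J}(z)=\nu_{\overline\lambda}(z)$ for $|z|<1$, and symmetrically for $|z|>1$. When $\lambda\in\mathbb{R}$ we have $\overline\lambda=\lambda$, so $\mu_{J\circ h_\lambda\circ J}=\mu_\lambda$ almost everywhere on $\RiemannSphere$. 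Since $J$ fixes $1$ and swaps $0$ and $\infty$, the map $J\circ h_\lambda\circ J$ also fixes $\{0,1,\infty\}$, so the uniqueness clause in Theorem \ref{MRMT} forces $J\circ h_\lambda\circ J=h_\lambda$. Consequently $h_\lambda(\mathbb{T})=\mathbb{T}$, and since $h_\lambda(0)=0\in\mathbb{D}$, we get $h_\lambda(\mathbb{D})=\mathbb{D}$.

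For part (ii), I exploit that $\mu_\lambda=\nu_\lambda$ on $\mathbb{D}$ by construction. Thus the two quasiconformal maps $h_\lambda$ and $g_\lambda$ have the same Beltrami coefficient on $\mathbb{D}$, so by Weyl's lemma the composition
$$
f_\lambda:=g_\lambda\circ h_\lambda^{-1}\colon h_\lambda(\mathbb{D})\longrightarrow g_\lambda(\mathbb{D})
$$
is conformal. By part (i), $h_\lambda(\mathbb{D})=\mathbb{D}$; and $g_\lambda(\mathbb{D})=\Omega_\lambda^-$ since $g_\lambda$ is a self-homeomorphism of $\RiemannSphere$ taking $\RiemannSphere\setminus\overline{\mathbb{D}}$ onto $\Omega_\lambda^+$. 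Hence $f_\lambda\colon\mathbb{D}\to\Omega_\lambda^-$ is conformal, while $g_\lambda\colon\RiemannSphere\setminus\overline{\mathbb{D}}\to\Omega_\lambda^+$ is conformal; both extend to homeomorphisms of the closures, and the identity $g_\lambda=f_\lambda\circ h_\lambda$ on $\overline{\mathbb{D}}$ restricts on $\mathbb{T}$ to $h_\lambda=f_\lambda^{-1}\circ g_\lambda$, which is precisely a conformal welding of $\mathcal{J}_{\lambda/4}$.

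For part (iii), the injectivity and normalization $h(0,z)=z$ are immediate: $h_\lambda$ is a homeomorphism by Theorem \ref{MRMT}, and $\mu_0\equiv 0$ together with the normalization forces $h_0=\mathrm{id}$. The point to check is the holomorphic dependence of $\lambda\mapsto h_\lambda(z)$, and by Theorem \ref{thmBR} it suffices to verify that $\lambda\mapsto\mu_\lambda$ is holomorphic as an $L^\infty(\mathbb{C})$-valued map with $\|\mu_\lambda\|_\infty\le|\lambda|$. On $\mathbb{D}$, $\mu_\lambda=\nu_\lambda$ and this holds because $g$ is itself a holomorphic motion, via Theorem \ref{thmBR} applied to $g$. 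On $\{|z|>1\}$, fix $z$ and set $w:=1/\overline z\in\mathbb{D}$; then $\mu_\lambda(z)=\overline{\nu_{\overline\lambda}(w)}\cdot z^4/|z|^4$, and since $\lambda\mapsto\nu_{\overline\lambda}(w)$ is antiholomorphic in $\lambda$, its conjugate is holomorphic. The bound $\|\nu_\lambda\|_\infty\le|\lambda|$ from Theorem \ref{thmBR} transfers to $\|\mu_\lambda\|_\infty\le|\lambda|$, so Theorem \ref{thmBR} applied in the reverse direction shows $h$ is a holomorphic motion; the fixed point $h(\lambda,\infty)=\infty$ is built in by the normalization.

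The main obstacle is the symmetry calculation in part (i): the formula for the Beltrami coefficient of $J\circ f\circ J$ must be computed carefully because $J$ is anti-holomorphic, and the precise factor $z^4/|z|^4$ in the definition of $\mu_\lambda$ is dictated by exactly this calculation, so it must be derived (not merely quoted) to confirm the construction does what it is designed to do.
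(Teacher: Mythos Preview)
Your proof is correct and follows essentially the same approach as the paper: the symmetry argument for (i), the Weyl's-lemma/uniqueness argument for (ii), and the appeal to Theorem~\ref{thmBR} for (iii) are all exactly what the paper does. The only difference is that you carry out explicitly the reflection calculation $\mu_{J\circ h_\lambda\circ J}(z)=\overline{\mu_\lambda(1/\overline z)}\,z^4/|z|^4$ where the paper simply cites exercises in \cite{BF}, and you spell out the antiholomorphic/conjugate reasoning for the holomorphy of $\lambda\mapsto\mu_\lambda$ on $\{|z|>1\}$ where the paper writes ``Clearly, this remains true''.
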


\begin{proof}
For (i), if $\lambda \in \mathbb{D} \cap \mathbb{R}$, then $\overline{\lambda}=\lambda$ and, by construction, the Beltrami coefficient $\mu_\lambda$ is symmetric with respect to $\mathbb{T}$, see \cite[Exercise 1.2.4]{BF}. It easily follows from the uniqueness part of Theorem \ref{MRMT} that the map $h_\lambda$ is symmetric with respect to $\mathbb{T}$, see \cite[Exercise 1.4.1]{BF}. Thus $h_\lambda(\mathbb{T})=\mathbb{T}$, from which we deduce that $h_\lambda(\mathbb{D})=\mathbb{D}$, since $h_\lambda$ fixes $0$.

For (ii), note that the Beltrami coefficients of $h_\lambda$ and $g_\lambda$ are equal almost everywhere on $\mathbb{D}$. By the uniqueness part of Theorem \ref{MRMT}, we have that $f_\lambda:=g_\lambda \circ h_\lambda^{-1}$ is conformal on $h_\lambda(\mathbb{D})=\mathbb{D}$, and $f_\lambda$ maps $\mathbb{D}$ onto $\Omega_{\lambda}^-$. It follows that $h_\lambda= f_\lambda^{-1} \circ g_\lambda$ is a conformal welding of $\mathcal{J}_{\lambda/4}$, as required.

For (iii), note that by Theorem \ref{thmBR}, the function $\lambda \mapsto \nu_\lambda$ is holomorphic on $\mathbb{D}$ and $\|\nu_\lambda\|_\infty \leq |\lambda|$ for all $\lambda \in \mathbb{D}$. Clearly, this remains true if $\nu_\lambda$ is replaced by $\mu_\lambda$. It follows that the map $h:\mathbb{D} \times \RiemannSphere \to \RiemannSphere$ defined by
$$h(\lambda,z)=h_\lambda(z) \qquad (\lambda \in \mathbb{D}, z \in \RiemannSphere)$$
is a holomorphic motion, again by Theorem \ref{thmBR}. Lastly, we have $h(\lambda,\infty)=\infty$ for each $\lambda \in \mathbb{D}$ by construction.
\end{proof}

Now, recall from \S\ref{S:sing} that there exists $\lambda_0$ such that for all $\lambda \in \mathbb{D} \cap \mathbb{R}$ with $0<|\lambda|<\lambda_0$, the harmonic measures $\omega_{\lambda}^-$ and $\omega_{\lambda}^+$ are mutually singular. Here $\omega_{\lambda}^-, \omega_{\lambda}^+$ denote the harmonic measures of the domains $\Omega_{\lambda}^-,\Omega_{\lambda}^+$ with respect to $0,\infty$ respectively.

\begin{lemma}
\label{singular}
For each $\lambda \in \mathbb{D} \cap \mathbb{R}$ with $0<|\lambda|<\lambda_0$, there exists a Borel set $A^\lambda \subset \mathbb{T}$ with $\sigma(A^\lambda)=1$ but $\sigma(h_\lambda(A^\lambda))=0$, where $\sigma$ is Lebesgue measure on $\mathbb{T}$ normalized so that $\sigma(\mathbb{T})=1$.
\end{lemma}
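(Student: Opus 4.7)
The plan is to exploit the conformal welding description of $h_\lambda|_\mathbb{T}$ from Proposition~\ref{confweld}(ii), namely $h_\lambda = f_\lambda^{-1}\circ g_\lambda$ on $\mathbb{T}$, and to translate the mutual singularity of $\omega_\lambda^-$ and $\omega_\lambda^+$ (which holds for $0<|\lambda|<\lambda_0$ by \S\ref{S:sing}) into a singular behaviour of $h_\lambda$ with respect to Lebesgue measure on $\mathbb{T}$. Since $\mathcal{J}_{\lambda/4}$ is a quasicircle, and in particular a Jordan curve, the conformal maps $g_\lambda:\RiemannSphere\setminus\overline{\mathbb D}\to\Omega_\lambda^+$ and $f_\lambda:\mathbb D\to\Omega_\lambda^-$ extend by Carath\'eodory's theorem to homeomorphisms of the closures, so both maps identify $\mathbb T$ bijectively with $\mathcal{J}_{\lambda/4}$.

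The key classical fact I would invoke is that, under such a boundary extension, the pushforward of normalized Lebesgue measure $\sigma$ on $\mathbb T$ is precisely the harmonic measure on $\mathcal{J}_{\lambda/4}$ seen from the image of the base point. Concretely, since $g_\lambda(\infty)=\infty$, we have $(g_\lambda)_*\sigma=\omega_\lambda^+$; and since $f_\lambda(0)\in\Omega_\lambda^-$, we have $(f_\lambda)_*\sigma=\tilde\omega_\lambda^-$, the harmonic measure in $\Omega_\lambda^-$ evaluated at $f_\lambda(0)$. By Harnack's inequality, $\tilde\omega_\lambda^-$ and $\omega_\lambda^-$ are mutually absolutely continuous, so the mutual singularity of $\omega_\lambda^-$ and $\omega_\lambda^+$ on $\mathcal{J}_{\lambda/4}$ established in \S\ref{S:sing} carries over to $\tilde\omega_\lambda^-$ and $\omega_\lambda^+$.

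Given this, there exists a Borel set $S\subset\mathcal{J}_{\lambda/4}$ with $\omega_\lambda^+(S)=1$ and $\tilde\omega_\lambda^-(S)=0$. I would then define
\[
A^\lambda:=g_\lambda^{-1}(S)\cap\mathbb T.
\]
The pushforward identity $(g_\lambda)_*\sigma=\omega_\lambda^+$ gives $\sigma(A^\lambda)=\omega_\lambda^+(S)=1$. On the other hand,
\[
h_\lambda(A^\lambda)=f_\lambda^{-1}\bigl(g_\lambda(A^\lambda)\bigr)=f_\lambda^{-1}(S),
\]
and the pushforward identity $(f_\lambda)_*\sigma=\tilde\omega_\lambda^-$ gives $\sigma(h_\lambda(A^\lambda))=\tilde\omega_\lambda^-(S)=0$, exactly as required.

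The arguments are all standard once the right objects are lined up; the only delicate point is the harmonic-measure identity $(g_\lambda)_*\sigma=\omega_\lambda^+$ and its analogue for $f_\lambda$, which requires the Jordan-curve (in fact quasicircle) character of $\mathcal{J}_{\lambda/4}$ to guarantee a boundary homeomorphism, and a brief appeal to Harnack to reconcile the base point $f_\lambda(0)$ with the reference base point $0$ used in the definition of $\omega_\lambda^-$. No other serious obstacle arises.
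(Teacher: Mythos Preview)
Your proof is correct and follows essentially the same route as the paper: write $h_\lambda=f_\lambda^{-1}\circ g_\lambda$ on $\mathbb{T}$, choose a Borel set in $\mathcal{J}_{\lambda/4}$ carrying full $\omega_\lambda^+$-measure and zero $\omega_\lambda^-$-measure, and pull it back by $g_\lambda$. The one place where you are actually more careful than the paper is the base-point issue for $f_\lambda$: the paper simply cites ``conformal invariance of harmonic measure'' without noting that $f_\lambda(0)$ need not equal $0$, whereas you correctly invoke Harnack to pass from $\omega_\lambda^-$ to harmonic measure at $f_\lambda(0)$ before applying the pushforward identity.
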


\begin{proof}
Fix $\lambda \in \mathbb{D} \cap \mathbb{R}$ with $0<|\lambda|<\lambda_0$. By part (ii) of Proposition \ref{confweld}, we have that $h_\lambda= f_\lambda^{-1} \circ g_\lambda$, where $f_\lambda:\mathbb{D} \to \Omega_{\lambda}^-$ and $g_\lambda:\RiemannSphere \setminus \overline{\mathbb{D}} \to \Omega_{\lambda}^+$ are conformal. Now, since the measures $\omega_{\lambda}^-$ and $\omega_{\lambda}^+$ on $\mathcal{J}_{\lambda/4}$ are mutually singular, there exists a Borel set $B^\lambda \subset \mathcal{J}_{\lambda/4}$ with $\omega_{\lambda}^+(B^\lambda)=1$ but $\omega_{\lambda}^-(B^\lambda)=0$. Set $A^\lambda:=g_\lambda^{-1}(B^\lambda) \subset \mathbb{T}$. Then $A^\lambda$ is a Borel subset of $\mathbb{T}$ with $\sigma(A^\lambda)=1$, by conformal invariance of harmonic measure (see e.g. \cite[Chapter I]{GAM}). On the other hand, since $\omega_{\lambda}^-(g_\lambda(A^\lambda))=0$, we get that $\sigma(h_\lambda(A^\lambda))=0$, again by conformal invariance of harmonic measure.
\end{proof}

\begin{corollary}
\label{singular2}
For every sequence $(\lambda_n) \subset \mathbb{D} \cap \mathbb{R}$ converging to $0$ with $0<|\lambda_n|<\lambda_0$ for all $n$, there exists a compact set $E \subset \mathbb{T}$ with $\sigma(E)>0$ but $\sigma(h_{\lambda_n}(E))=0$ for all $n$.
\end{corollary}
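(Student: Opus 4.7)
The plan is to combine the sets $A^{\lambda_n}$ from Lemma \ref{singular} (one for each term of the sequence) and then pass to a compact subset using inner regularity. The countability of the sequence is what makes this work.

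More concretely, I would first apply Lemma \ref{singular} to each $\lambda_n$ to obtain Borel sets $A^{\lambda_n} \subset \mathbb{T}$ with $\sigma(A^{\lambda_n}) = 1$ and $\sigma(h_{\lambda_n}(A^{\lambda_n})) = 0$. Then I would set
$$A := \bigcap_{n} A^{\lambda_n}.$$
Since $\sigma(\mathbb{T} \setminus A^{\lambda_n}) = 0$ for each $n$ and the intersection is countable, subadditivity gives $\sigma(\mathbb{T} \setminus A) = 0$, hence $\sigma(A) = 1$.

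Next, by inner regularity of Lebesgue measure on $\mathbb{T}$, I can select a compact set $E \subset A$ with $\sigma(E) > 0$ (any positive value less than $1$ suffices). For each $n$, the inclusion $E \subset A \subset A^{\lambda_n}$ gives $h_{\lambda_n}(E) \subset h_{\lambda_n}(A^{\lambda_n})$, so monotonicity of $\sigma$ yields
$$\sigma(h_{\lambda_n}(E)) \leq \sigma(h_{\lambda_n}(A^{\lambda_n})) = 0.$$
Note that $h_{\lambda_n}(E)$ is automatically compact (hence Borel) since $h_{\lambda_n}$ is continuous on $\RiemannSphere$, so the measure is well-defined without any subtlety.

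There is no real obstacle here; the only thing to be slightly careful about is that the construction works because we have countably many parameters $\lambda_n$, so that the intersection of the full-measure sets still has full measure. If the statement were phrased for an uncountable family of parameters, this argument would break down, which is precisely why the corollary is formulated along a sequence.
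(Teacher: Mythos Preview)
Your proof is correct and follows exactly the same route as the paper: apply Lemma~\ref{singular} to each $\lambda_n$, intersect the resulting full-measure sets $A^{\lambda_n}$, and pass to a compact subset by inner regularity. You even make explicit the justification (countable subadditivity, monotonicity, and continuity of $h_{\lambda_n}$) that the paper leaves implicit.
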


\begin{proof}
Let $(\lambda_n) \subset \mathbb{D} \cap \mathbb{R}$ be a sequence converging to $0$ with $0<|\lambda_n|<\lambda_0$ for all $n$. Then for each $n$, there exists a Borel set $A^{\lambda_n} \subset \mathbb{T}$ with $\sigma(A^{\lambda_n})=1$ but $\sigma(h_{\lambda_n}(A^{\lambda_n}))=0$, by Lemma \ref{singular2}. Set $A:= \cap_n A^{\lambda_n}$. Then $\sigma(A)=1$ and $\sigma(h_{\lambda_n}(A))=0$ for all $n$. By regularity of Lebesgue measure, there exists a compact set $E \subset A$ with $\sigma(E)>0$, and the result follows.
\end{proof}

\begin{proof}[Completion of the proof of Theorem~\ref{mainthm2}]
Let $h_{\lambda}$ be as in Proposition \ref{confweld}, so that the map $h:\mathbb{D} \times \RiemannSphere \to \RiemannSphere$ defined by
$$h(\lambda,z):=h_{\lambda}(z) \qquad (\lambda \in \mathbb{D}, z \in \RiemannSphere)$$
is a holomorphic motion of $\RiemannSphere$ with $h_\lambda(\infty)=\infty$ for all $\lambda \in \mathbb{D}$.

Now, take any sequence $(\lambda_n) \subset \mathbb{D} \cap \mathbb{R}$ converging to $0$ with $0<|\lambda_n|<\lambda_0$ for all $n$, and let $E \subset \mathbb{T}$ be as in Corollary \ref{singular2}. Then $\sigma(E)>0$, but $\sigma(E_{\lambda_n})=0$ for all $n$. By a result of Murai (see \cite[Corollary 3]{MUR}), the analytic capacity of a compact subset of $\mathbb{T}$ is zero if and only if its length is zero. Thus $\gamma(E)>0$, whereas $\gamma(E_{\lambda_n})=0$ for all $n$, and the function
$$\lambda \mapsto \gamma(E_\lambda) \qquad (\lambda \in \mathbb{D})$$
is discontinuous at $\lambda=0$, as required.
\end{proof}

\section{Concluding remarks}\label{S:conclusion}

(1) Examples such as the one constructed in Theorem~\ref{mainthm2} do not exist if, in addition,
 the maps $h_\lambda$ are required to be conformal on $\RiemannSphere \setminus E$. Indeed, by \cite[Theorem 1.1]{PRY}, this additional assumption on $h$ implies that the function
$$\lambda \mapsto \gamma(E_\lambda) \qquad (\lambda \in \mathbb{D})$$
is $C^{\infty}$, since its logarithm is harmonic. The B\"{o}ttcher motion $g$ of \S\ref{S:Julia} is an example of such a holomorphic motion.

(2) As mentioned in the introduction,
analytic capacity is closely related to one-dimensional Hausdorff measure $\mathcal {H}^1$.
This suggests an alternative, maybe simpler, approach to proving Theorem~\ref{mainthm2}, namely, to study
the continuity of $\mathcal{H}^1(E_\lambda)$.

Once again, let $\mathcal{J}_c$ be the Julia set of $z^2+c$
and let $\mathcal{M}$ denote the Mandelbrot set.
This time, we are interested in the case where $c\notin\mathcal{M}$.
It is well known that  $\dim_H(\mathcal{J}_c)\to0$ as $|c|\to\infty$,
and also that $\sup_{c\notin\mathcal{M}}\dim_H(\mathcal{J}_c)=2$.
Also, using Theorem~\ref{Astala}, one sees that the function $c\mapsto\dim_H(\mathcal{J}_c)$
is continuous on $\mathbb{C}\setminus\mathcal{M}$.
Therefore there exists $c_0\in \mathbb{C}\setminus\mathcal{M}$ and a sequence $c_n$ converging to $c_0$
such that $\dim_H(\mathcal{J}_{c_n})<1$ for all $n$ but $\dim_H(\mathcal{J}_{c_0})=1$.
Clearly $\mathcal{H}^1(\mathcal{J}_{c_n})=0$ for all~$n$. On the other hand,
by the general theory of hyperbolic rational functions (see e.g.\ \cite{URB}),
$0<\mathcal{H}^1(\mathcal{J}_{c_0})<\infty$.
Thus the function $c\mapsto\mathcal{H}^1(\mathcal{J}_{c})$ is discontinuous at $c_0$.

This example can easily be converted into an example of a holomorphic motion $E_\lambda$ for which
$\lambda\mapsto\mathcal{H}^1(E_\lambda)$  is discontinuous: just take $E_\lambda:=\mathcal{J}_{c_0+a\lambda}$ for a small enough value of $a>0$. Moreover, the analogous construction works for $s$-dimensional Hausdorff measure, for each $s\in(0,2)$. On the other hand, when $s=2$ the corresponding Hausdorff measure is just a multiple of  area measure, and, as we have seen in Theorem~\ref{Astala}, the area of a holomorphic motion \emph{is} continuous.

The construction just described (for $\mathcal{H}^1$) comes close to giving another example that would prove Theorem~\ref{mainthm2}. Indeed, since $\mathcal{H}^1$-measure zero implies analytic capacity zero, the sets $\mathcal{J}_{c_n}$ all have analytic capacity zero. All that is lacking is to show that $\mathcal{J}_{c_0}$ is of positive analytic capacity. However, as mentioned in the introduction, positive $\mathcal{H}^1$-measure does not in general imply positive analytic capacity. The standard counter-example is the so-called Cantor $1/4$-square
(see e.g.\ \cite[Chapter~IV, Theorem~2.7]{GAR}).
As the set $\mathcal{J}_{c_0}$ is also a Cantor-type set,
it is unclear whether its analytic capacity is positive or not. So we conclude by formally posing this as question.

\begin{question}
Is $\gamma(\mathcal{J}_{c_0})>0$?
\end{question}

\subsection*{Acknowledgments}
The authors would like to thank Peter Lin for fruitful conversations.

\bibliographystyle{amsplain}

\end{document}